\newtheorem{theorem}{Theorem}
\newtheorem{proposition}[theorem]{Proposition}
\newtheorem{lemma}[theorem]{Lemma}
\newtheorem{cor}[theorem]{Corollary}
\newcommand{\gid}{\gamma^{\rm ID}}
\begin{document}
\title{Identifying codes of the direct product of two cliques}

\author{Douglas F. Rall\thanks{The second author is Herman N. Hipp Professor of Mathematics
at Furman University.
This work was partially supported by a grant from the Simons Foundation
(\#209654 to Douglas Rall).}
\\
Furman University\\
Greenville, SC, USA\\
doug.rall@furman.edu
\and
Kirsti Wash \\
Clemson University\\
Clemson, SC, USA\\
kirstiw@g.clemson.edu
}

\date{\today}
\maketitle

\begin{abstract}
An identifying code in a graph is a dominating set that also has the property that the closed
neighborhood of each vertex in the graph has a distinct intersection with the set.  The
minimum cardinality of an identifying code in a graph $G$ is denoted $\gid(G)$. It was recently
shown by Gravier, Moncel and Semri that $\gid(K_n \Box K_n) = \lfloor{\frac{3n}{2}\rfloor}$.
Letting $n, m \ge 2$ be any integers, we consider identifying codes of the direct product
$K_n \times K_m$. In particular, we answer a question of Klav\v{z}ar and show the exact value
of $\gid(K_n \times K_m)$.
\end{abstract}

\noindent
{\bf Keywords:} Identifying code; Direct product \\

\noindent
{\bf AMS subject classification (2010)}: 05C69, 05C76, 94B60

\section{Introduction} \label{intro}
An identifying code in a graph is a dominating set that also has the property
that the closed neighborhood of each vertex has a distinct intersection with the
set.  Because of this characteristic of the dominating set every vertex can be
uniquely located by using this intersection with the identifying code.  The first
to study identifying codes were Karpovsky, Chakrabarty and Levitin \cite{IDCintro}
who used them to analyze fault-detection problems in multiprocessor systems.
An excellent, detailed list of references on identifying codes  can be found on
Antoine Lobstein's webpage \cite{Lobweb}.  The
usual invariant of interest is the minimum cardinality of an identifying code in
a given graph.  In this regard various families  of graphs have been studied,
including trees~\cite{Lobweb}, paths~\cite{bchl2004}, cycles~\cite{bchl2004,gms2006,xth2008},
and infinite grids~\cite{bl2005,chmglpz1999,idcodegrids}.

In terms of graph products, a  few of the more recent results have been in the study of
 hypercubes~\cite{bhl2000, hl2002,idcodehyper,kcla1999,m2006},
 the Cartesian product of two same size cliques~\cite{S.-Gravier:2008uq}, and the lexicographic
 product of two graphs~\cite{Feng:2012fk}. A natural problem (posed by
 Klav\v{z}ar~\cite{k2011} at the
 Bordeaux Workshop on Identifying Codes in 2011) is to determine the order of a minimum
 identifying code in the direct product of two complete graphs.  In this paper we completely solve
 this problem.

The remainder of the paper is organized as follows.  We first give some useful
definitions and terminology.  In Section~\ref{sec:mainresults} we state the main results
which give the cardinality of a minimum identifying code for the direct product of
any two nontrivial cliques.  Section~\ref{sec:properties} is devoted to deriving
some important properties that will be useful in showing that a set of vertices is
an ID code in a direct product of 2 cliques.  Then the proofs of the main results
are given in Section~\ref{sec:proofs}.

\subsection{Definitions and Notation} \label{sec:defns}

Given a simple undirected graph $G$ and a vertex $x$ of $G$, we let $N(x)$ denote the
\textit{open neighborhood} of $x$, that is, the set of vertices adjacent to $x$.
The \textit{closed neighborhood} of $x$ is  $N[x]=N(x) \cup \{x\}$.  A subset $D \subseteq V(G)$
is a \textit{dominating set} of $G$ if $D$ has a nonempty intersection with the closed
neighborhood of every vertex of $G$. A subset $S \subseteq V(G)$  \textit{separates}
two distinct vertices $x$ and $y$ if $N[x] \cap S \ne N[y] \cap S$. When $S=\{u\}$ we
say that $u$ separates $x$ and $y$.  An \textit{identifying code}
(\textit{ID code} for short) of $G$ is a subset $C$ of vertices that is a dominating set of
$G$ with the additional property that $C$ separates every pair of distinct vertices of $G$.
The minimum cardinality of an ID code of $G$ is  denoted $\gid(G)$.  If $C$ is an ID code
of $G$, then any vertex in $C$ is called a \textit{codeword}.  Note that any
graph having two vertices with the same closed neighborhood (so-called \textit{twins})
does not have an ID code.

Given two graphs $G_1 = (V_1, E_1)$ and $G_2 = (V_2, E_2)$, the \textit{direct product}
of $G_1$ and $G_2$, denoted $G_1 \times G_2$, is the graph whose vertex set
is the Cartesian product, $V_1 \times V_2$, and whose edge set is
$E(G_1 \times G_2) = \{(u_1, u_2)(v_1, v_2)\,|\,u_1v_1 \in E_1 \text{ and } u_2v_2\in E_2\}$.
Direct products have been studied for some time, and extensive information on their
structural properties can be found in \cite{HPG}.

For a positive integer $n$ we write $[n]$ to denote the set $\{1,2,\ldots,n\}$, and $[n]$
will be the vertex set of the complete graph $K_n$. In the direct product $K_n \times K_m$
we refer to a {\it column} as the set of all vertices having the same first coordinate.
A {\it row} is the set of all vertices with the same second coordinate.  In particular, for
$i \in [n]$, the $i^{\rm th}$ column is  $C_i =\{(i,j)\,|\, j \in [m]\}$.  Similarly, for
$j \in [m]$ the $j^{\rm th}$ row is the set $R_j = \{(i,j) \,|\, i \in [n]\}$.  In any
figures rows will be horizontal and columns vertical.  For ease of reference in this paper
we refer to $K_n$ as the \textit{first factor} of $K_n \times K_m$  and $K_m$ as the
\textit{second factor}.  The 2 product graphs $K_n \times K_m$ and $K_m \times K_n$
are clearly isomorphic under a natural map.  Throughout the remainder of this work
we always have the smaller factor first.

Let $G=K_n \times K_m$ and suppose that $C \subseteq V(G)$.  The \textit{column span of $C$}
is the set of all columns of $G$ that have a nonempty intersection with $C$.  The number of
columns in the column span of $C$ is denoted by $cs(C)$.  Similarly, the set of all rows of $G$
that contain at least one member of $C$ is the \textit{row span of $C$}; its size is denoted
$rs(C)$.  For a vertex $v=(i,j)$ of $G$ we say that $v$ is \textit{column-isolated
in $C$} if $C \cap C_i = \{v\}$.  Similarly, if $C \cap R_j = \{v\}$ then
we say that $v$ is \textit{row-isolated in $C$}. If $v$ is both column-isolated and
row-isolated in $C$, we simply say $v$ is \textit{isolated in $C$}.  When there is no chance
of confusion and the set $C$ is clear from the context we shorten these to column-isolated,
row-isolated and isolated, respectively.  

\section{Main Results} \label{sec:mainresults}

\vskip5 mm
In this paper we determine the minimum cardinality of an identifying code
for the direct product of any two nontrivial complete graphs.  We prove
the following results.  Note that $K_2 \times K_2$ has vertices with
identical closed neighborhoods and so has no ID code.

\begin{theorem} \label{thm:nequals2}
For any positive integer $m \ge 5$, $\gid(K_2 \times K_m)=m-1$. In addition,
if $3 \le m \le 4$, $\gid(K_2\times K_m)=m$.
\end{theorem}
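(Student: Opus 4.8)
The plan is to work directly from the adjacency rule of the direct product: in $K_2 \times K_m$ two vertices $(i,j)$ and $(i',j')$ are adjacent precisely when $i \ne i'$ and $j \ne j'$, so the two columns $C_1,C_2$ are independent sets and the graph is bipartite (the crown graph on $2m$ vertices). For a candidate code $C$ I would set $A = C \cap C_1$ and $B = C \cap C_2$ and record the two fundamental identities
\[
N[(1,j)] \cap C = (\{(1,j)\} \cap A) \cup (B \setminus \{(2,j)\}), \qquad
N[(2,j)] \cap C = (\{(2,j)\} \cap B) \cup (A \setminus \{(1,j)\}).
\]
Everything reduces to controlling these two families of sets as $j$ ranges over $[m]$.

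First I would prove the lower bound $\gid(K_2 \times K_m) \ge m-1$ by a counting argument on rows. Classify each row $j$ by whether $(1,j)\in A$ and whether $(2,j)\in B$, writing $n_{00},n_{10},n_{01},n_{11}$ for the four counts; then $|C| = n_{10}+n_{01}+2n_{11} = m - n_{00} + n_{11}$. The key separation observation is that if two rows $j \ne k$ are both of type $00$ (neither coordinate a codeword) then, by the first identity, $N[(1,j)]\cap C = B = N[(1,k)]\cap C$, so they are not separated. Hence $n_{00}\le 1$, which gives $|C| = m - n_{00} + n_{11} \ge m-1$.

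Next I would extract the structure forced by equality. If $|C|=m-1$ then $n_{00}=1$ and $n_{11}=0$: there is exactly one type-$00$ row, and every other row carries exactly one codeword, $p$ of them in column~$1$ and $q$ in column~$2$ with $p+q=m-1$. Here domination becomes the binding constraint. The column-$2$ vertex in a type-$10$ row is not a codeword and sees only $A \setminus \{(1,j)\}$, so it is dominated only if $p\ge 2$; symmetrically $q\ge 2$. Thus $m-1 = p+q \ge 4$, i.e.\ a code of size $m-1$ can exist only when $m\ge 5$. For $3\le m\le 4$ this rules out size $m-1$ and yields $\gid \ge m$.

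Finally I would supply matching constructions. For $m\ge 5$, pick one type-$00$ row and split the remaining $m-1$ rows into a part $S_1$ of size $p\ge 2$ (codewords in column~$1$) and a part $S_2$ of size $q\ge 2$ (codewords in column~$2$); the verification that this is dominating and separating I would organize by tagging each code by the pair (number of column-$1$ codewords, number of column-$2$ codewords) it contains, which makes all $2m$ codes visibly distinct. For $m\in\{3,4\}$ the set $C=C_2$ (all of one column, $A=\emptyset$) has size $m$, dominates everything, and gives $(1,j)$ the code $C_2\setminus\{(2,j)\}$ and $(2,j)$ the code $\{(2,j)\}$, which are pairwise distinct; so $\gid \le m$, matching the lower bound. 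The part needing the most care is the extremal analysis: the domination argument forcing $p,q\ge 2$ is exactly what separates the $m\ge 5$ regime from the small cases, together with the case check that the size-$(m-1)$ construction truly separates every pair.
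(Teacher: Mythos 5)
Your proof is correct, and its skeleton matches the paper's, but the organization is genuinely different in one place. Your row-type count ($n_{00}\le 1$, hence $|C| = m - n_{00} + n_{11} \ge m-1$) is exactly the paper's Lemma~\ref{lem:lowerbound} specialized to $n=2$, and your constructions contain the paper's codes as instances: the paper's $\{(1,1),(1,2)\}\cup\{(2,r)\,|\,3\le r\le m-1\}$ is your split with $p=2$, $q=m-3$, and its code for $m=3$ is a full column. Where you differ is the small cases: the paper disposes of $m=3,4$ by an ad hoc domination check (no $2$-element set meeting two rows dominates $K_2\times K_3$, ``similarly'' for $m=4$), whereas you run a uniform extremal analysis of any putative code of size $m-1$ (equality forces $n_{00}=1$, $n_{11}=0$, and domination forces $p,q\ge 2$, so $m-1\ge 4$). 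This buys a single argument covering all $m\ge 3$ at once, explains why $m=5$ is the threshold, and in fact characterizes the structure of minimum codes, none of which the paper attempts; the paper's version is shorter because it never needs the equality analysis. One boundary case in your extremal step needs half a line: if $p=0$ there is no type-$10$ row, so your sentence forcing $p\ge 2$ is vacuous as stated; but then the column-$2$ vertex of the type-$00$ row $j_0$ has code $A\setminus\{(1,j_0)\}=\emptyset$ and is undominated, giving $p\ge 1$, after which your type-$10$ argument yields $p\ge 2$ (and symmetrically $q\ge 2$). With that patch, and with the within-class distinctness spelled out behind your signature bookkeeping (the six signatures $(1,q)$, $(0,q-1)$, $(0,q)$, $(p,1)$, $(p-1,0)$, $(p,0)$ are pairwise distinct because $p,q\ge 2$, and codes sharing a signature differ in an explicit vertex), your verification is complete; note the paper could not route this through its Propositions~\ref{prop:fullspan} or \ref{prop:partialspan}, which require $n\ge 3$, so a hand verification like yours is unavoidable here.
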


For $3 \le n \le 5$ and $n \le m \le 2n-1$ the values of
$\gid(K_n \times K_m)$ were computed by computer program and are given in the following table.

\begin{table}[ht!]
\begin{center}
$
\begin{array}{c|*{11}{p{5mm}}l}
n \backslash ^{\textstyle m} & 3 & 4 & 5 & 6 & 7 & 8 & 9 \\ \hline
3 & 4 & 4 & 5 \\
4 &   & 5 & 6 & 7 & 7  \\
5 &   &   & 6 & 7 & 8 & 9 & 9   \\
\end{array}
$
\end{center}
\caption{$\gid(K_n \times K_m)$ for small $n$ and $m$}
\label{fig:table}
\end{table}

The remaining cases are handled based on the size of the second factor
relative to the first factor.  Theorem~\ref{thm:largem} presents this number if
both cliques have order at least 3 and one clique is sufficiently large compared
to the other; its proof is given in Section~\ref{sec:proofs}.

\begin{theorem} \label{thm:largem}
For positive integers $n$ and $m$ where $n \ge 3$ and $m \ge 2n$,
\[\gid(K_n \times K_m) = m-1\,.\]
\end{theorem}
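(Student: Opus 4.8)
The plan is to prove the two inequalities $\gid(K_n\times K_m)\ge m-1$ and $\gid(K_n\times K_m)\le m-1$ separately. The engine for both is a careful description of the trace $C\cap N[v]$ that a candidate set $C$ leaves on each closed neighborhood. The one observation that drives everything is that in $K_n\times K_m$ two vertices are adjacent exactly when they differ in \emph{both} coordinates, so a codeword $c$ fails to lie in $N[(i,j)]$ precisely when $c\neq(i,j)$ and $c$ shares a row or a column with $(i,j)$. It is therefore cleaner to track the complementary trace $\overline{I}(v):=C\setminus N[v]$, the codewords that share a coordinate with $v$ but are not $v$ itself, and to recall that separating $x$ and $y$ is the same as requiring $\overline{I}(x)\neq\overline{I}(y)$.

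For the lower bound I would first handle the pairs lying in a common column. For $x=(i,j)$ and $y=(i,j')$ a short case check shows that $x$ and $y$ are separated if and only if at least one of the rows $R_j,R_{j'}$ contains a codeword (the only other separators, namely $x$ or $y$ themselves, already force the corresponding row to be nonempty). Consequently a column can contain at most one vertex whose row is empty, and since this statement is independent of the column, the whole code can miss at most one row. Hence $rs(C)\ge m-1$, and as every row in the row span contributes at least one codeword, $|C|\ge rs(C)\ge m-1$. This argument uses nothing special about $m\ge 2n$, so it already yields the lower bound for all $n,m$.

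For the upper bound I would exhibit an explicit code of size $m-1$. Because the lower bound forces $rs(C)=m-1$ with one codeword per nonempty row, I place exactly one codeword in each of the rows $R_1,\dots,R_{m-1}$, leave $R_m$ empty, and choose the columns so that (i) no column is empty and (ii) at most one column contains a single codeword, every other column containing at least two. This is possible precisely because $m-1\ge 2n-1$: take one singleton column and $n-1$ columns of size two when $m=2n$, and absorb the surplus into existing columns otherwise. Writing $K_i$ for the set of codewords in column $C_i$ and $p_j$ for the unique codeword in row $R_j$, the complementary traces become completely explicit: a codeword $(i,j)$ has $\overline{I}=K_i\setminus\{(i,j)\}$; a non-codeword $(i,j)$ with $j\le m-1$ has $\overline{I}=K_i\cup\{p_j\}$; and a vertex $(i,m)$ in the empty row has $\overline{I}=K_i$.

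It then remains to check that these $nm$ sets are pairwise distinct and, for domination, that none equals all of $C$; the latter is immediate since $n\ge 3$ forces every $|K_i|\le m-3$. The same-column and same-row separations are automatic from having one empty row and no empty column (the latter via the symmetric version of the column criterion), so the real work, and the main obstacle, is separating pairs that differ in both coordinates. Comparing the explicit traces, the only dangerous coincidence is the ``cross'' case in which two distinct non-codewords $(i,j)$ and $(i',j')$ satisfy $K_i\cup\{p_j\}=K_{i'}\cup\{p_{j'}\}$; matching, column by column, the two support columns on each side forces $K_i=\{p_{j'}\}$ and $K_{i'}=\{p_j\}$, that is, both $C_i$ and $C_{i'}$ are singleton columns. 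Condition (ii) forbids two singleton columns, and condition (i) kills the analogous collisions that would otherwise arise once a $K_i$ is empty; together they rule out every coincidence, so the construction is an ID code of size $m-1$ and the two bounds meet.
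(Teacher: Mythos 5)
Your argument is correct, and your lower bound is precisely the paper's Lemma~\ref{lem:lowerbound} (two code-free rows leave any two vertices of a common column in those rows unseparated; this indeed uses nothing about $m\ge 2n$), but your upper-bound verification takes a genuinely different route. The paper exhibits the specific code $D=\{(i,2i-1),(i,2i)\,|\,i\in[n-1]\}\cup\{(n,j)\,|\,2n-1\le j\le m-1\}$ and then merely checks the hypotheses of Proposition~\ref{prop:partialspan}, whose proof in turn rests on the 4-corners machinery of Lemma~\ref{lem:4corners}; note that $D$ is exactly one instance of your family (one codeword in each of the rows $R_1,\dots,R_{m-1}$, all $n$ columns hit, the surplus absorbed into column $C_n$, which is the unique singleton column when $m=2n$). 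You instead verify identifiability from scratch by tabulating the complementary traces $\overline{I}(v)=C\setminus N[v]$ for the three vertex types; your cross-case computation is the 4-corners property in complemented form, your condition (i) is the paper's hypothesis $cs(C)=n$, and, since every codeword in your family is row-isolated, your condition (ii) is exactly the paper's ``at most one isolated codeword'' (hypothesis (4) of the propositions, cf.\ Corollary~\ref{cor:3results1}). What your route buys is flexibility: Proposition~\ref{prop:partialspan} needs the diagonal triple of its hypothesis (1) to obtain domination, which an arbitrary member of your family need not contain (the column indices could, say, decrease as the row index increases), whereas your counting bound $|\overline{I}(v)|\le (m-n)+1\le m-2<m-1=|C|$ gives domination for every arrangement; so you certify a strictly larger family of optimal codes at the cost of redoing the separation analysis by hand. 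One small imprecision: the ``only dangerous coincidence'' is not solely the non-codeword cross case --- two codewords lying in two distinct singleton columns also have equal (empty) traces $\overline{I}$ --- but this collision is excluded by the same condition (ii) you invoke, so the proof stands as written.
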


In all other cases (that is, for $6 \le n \le m \le 2n-1$), the minimum cardinality
of an ID code for $K_n \times K_m$ is one of the values $\lfloor{2(n+m)/3}\rfloor$
or $\lceil{2(n+m)/3}\rceil$.  The number $\gid(K_n \times K_m)$ depends on the congruence
of $n+m$ modulo 3.   It turns out there are only 2 general cases instead of 3, but
one of them has an exception to the easily stated formula.   The exact values are
given in the following results whose proofs are given in Section~\ref{sec:proofs}.

\begin{theorem} \label{thm:0or2mod3}
Let $n$ and $m$ be positive integers such that $6 \le n \le m \le 2n-1$.
If $n + m \equiv 0 \pmod{3}$ or $n + m \equiv 2 \pmod{3}$, then
\[\gid(K_n \times K_m) = \left\lfloor\frac{2m+2n}{3}\right\rfloor\,.\]
\end{theorem}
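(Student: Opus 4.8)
The plan is to prove the matching lower bound $\gid(K_n \times K_m)\ge \lfloor 2(n+m)/3\rfloor$ valid for every ID code, together with a construction attaining it. The engine for both halves is a set of separation criteria that I would establish first (these are the ``properties'' the paper postpones to its next section). Writing $v=(i,j)$ and recalling that $(i,j)\sim(i',j')$ exactly when $i\ne i'$ and $j\ne j'$, a direct case analysis on the position of a codeword relative to a pair of vertices yields: two vertices in a common column (rows $j,j'$) are separated if and only if not both $R_j$ and $R_{j'}$ are free of codewords; symmetrically for a common row; and a \emph{cross} pair $(i,j),(i',j')$ with $i\ne i'$, $j\ne j'$ is separated if and only if some codeword lies in $C_i\cup C_{i'}\cup R_j\cup R_{j'}$ but off the four \emph{corners} $(i,j),(i,j'),(i',j),(i',j')$. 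From the first two criteria I obtain at once that an ID code $C$ has at most one empty column and at most one empty row, i.e.\ $cs(C)\ge n-1$ and $rs(C)\ge m-1$. From the cross criterion I obtain the key rigidity lemma: $C$ has \emph{at most one isolated codeword}, since if $(i_1,j_1)$ and $(i_2,j_2)$ were both isolated then (being alone in their rows and columns) $i_1\ne i_2$ and $j_1\ne j_2$, and the only codewords meeting rows $j_1,j_2$ or columns $i_1,i_2$ would be those two points, which sit on the corners of the cross pair $(i_1,j_2),(i_2,j_1)$, leaving it unseparated.

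For the lower bound I would count codewords by lines. Let $z_r\le 1$ be the number of empty rows and $r_1$ the number of rows carrying exactly one codeword; since every other nonempty row carries at least two, $|C|\ge r_1+2(m-z_r-r_1)=2(m-z_r)-r_1$, and symmetrically $|C|\ge 2(n-z_c)-c_1$. On the other hand, summing the indicator ``row singleton'' $+$ ``column singleton'' over all codewords gives $r_1+c_1=|C|+(\#\text{isolated})-(\#\text{codewords in a multi-row and a multi-column})\le |C|+1$ by the rigidity lemma. Adding the two line-count inequalities and substituting yields
\[
3|C|\ \ge\ 2(n+m)-2(z_r+z_c)-1 .
\]
When $z_r=z_c=0$ this is exactly $|C|\ge\lfloor 2(n+m)/3\rfloor$ in both residue classes $n+m\equiv 0,2\pmod 3$, so the entire difficulty of the lower bound is concentrated in the case of an empty line. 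There I would argue, again from the cross criterion, that an empty row (resp.\ column) forbids certain doubled-line configurations and thereby forces enough codewords lying simultaneously in a multi-row and a multi-column to offset the $-2(z_r+z_c)$ loss; this case analysis is the hardest part of the lower bound.

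For the upper bound I would build a code of the stated size and test it against the same criteria. I picture a candidate $C$ as the edge set of a bipartite graph between the $n$ columns and the $m$ rows, where the codeword $(i,j)$ becomes the edge $C_i$--$R_j$; then ``no isolated codeword'' means no component is a single edge, and ``no empty line'' means no isolated vertex. When $n+m\equiv 0\pmod 3$ I would decompose all $n+m$ lines into vertex-disjoint two-edge paths, $a=(2n-m)/3$ of type (two columns, one row) and $b=(2m-n)/3$ of type (one column, two rows); these are nonnegative integers precisely because $n+m\equiv 0$ and $n\le m\le 2n-1$ (the regime complementary to Theorem~\ref{thm:largem}), and they use $2(a+b)=2(n+m)/3$ codewords with no isolated codeword and no empty line. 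When $n+m\equiv 2\pmod 3$ I would alter the decomposition by one piece---either leaving a single line empty or admitting one isolated codeword, both permitted by the rigidity lemma---to reach exactly $\lfloor 2(n+m)/3\rfloor$ codewords.

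It then remains to confirm that such a code separates every pair and dominates. Domination and the two within-line criteria are immediate from ``at most one empty line,'' so the real work is with the cross pairs: I must check that for every choice of two rows and two columns, not all codewords meeting them land on the four corners. Because each piece of the decomposition occupies its own rows and columns, a codeword of one piece almost always supplies an off-corner separator for a pair straddling two pieces, and the within-piece pairs are handled directly; pinning down the finitely many alignments in which two small pieces could conspire to fill the corners---and choosing the assignment of specific rows and columns to pieces so as to avoid them---is the main obstacle of the upper bound, and is exactly where the hypotheses $n\ge 6$ and $m\le 2n-1$ are used.
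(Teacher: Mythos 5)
Your lower-bound scaffolding is sound as far as it goes: the count $3|C|\ge 2(n+m)-2(z_r+z_c)-1$ is correct (and in the full-span case $z_r=z_c=0$ it is exactly the paper's Case~1, with your $r_1+c_1$ equal to $x+y$ in the paper's notation), but the moment a line is empty it falls short by precisely one codeword: with $z_r+z_c=1$ and $n+m\equiv 0\pmod 3$ it yields only $|C|\ge 2(n+m)/3-1$, and with $z_r+z_c=2$ it is one short in both residue classes. You acknowledge this and say you \emph{would} argue that the cross criterion forces compensating codewords, but you never produce that argument, and it is the bulk of the proof. The paper closes the deficit in two stages: first a sharpened count via Corollary~\ref{cor:3results2} (if $cs(C)=n-1$, any column whose intersection with $C$ has size at least $2$ and consists entirely of row-isolated codewords must in fact contain at least $3$ codewords; this is the source of the $+p$ and $+q$ correction terms), and then, in each of the three deficient-span cases, a structural analysis of the extremal configurations ($p+q\le 1$, forced equalities such as $C=A_c\cup B_c$ and $cs(A_c)=2$), ending in contradictions like $m<n$ or $n+m\le 6$. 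Note also that it is here, not in the upper bound, that the hypotheses $6\le n\le m\le 2n-1$ chiefly operate (e.g., Case~3 needs $y\ge \frac{4n-2m}{3}+1\ge \frac{5}{3}$, which uses $m\le 2n-1$). As written, you have proved the lower bound only for codes with full row and column span; the hard half is missing.

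On the upper bound you are essentially reconstructing the paper: your two-edge-path decomposition of the $n+m$ lines is exactly the paper's $D_1$ (the pieces $\{(i,2i-1),(i,2i)\}$ and $\{(a+2j-1,2a+j),(a+2j,2a+j)\}$), and the one-isolated-codeword modification for $n+m\equiv 2\pmod 3$ is $D_2$/$D_3$. However, your anticipated ``main obstacle''---choosing the assignment of rows and columns to pieces to dodge finitely many corner-filling alignments---is a misdiagnosis. Proposition~\ref{prop:fullspan} shows no such tuning is needed: once every codeword is row- or column-isolated, the span is full, and at most one codeword is isolated, an unseparated cross pair of non-codewords would force \emph{two} isolated codewords at the opposite corners, which is excluded, and pairs involving codewords are handled directly by the isolation structure. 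The one point you gloss too quickly runs the other way: domination is not immediate from ``at most one empty line'' (a code inside a single column plus a single row can have full span yet leave their crossing vertex undominated), which is why condition (1) of Proposition~\ref{prop:fullspan} demands three staircase codewords $(n_1,m_1),(n_2,m_2),(n_3,m_3)$; your vertex-disjoint pieces do supply these, but the check must be made. These repairs are minor; the genuine gap is the empty-line case analysis of the lower bound.
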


\begin{theorem} \label{thm:2nminus5}
For a positive integer $n \ge 6$,
\[\gid(K_n \times K_{2n-5})=2n-4\,.\]
\end{theorem}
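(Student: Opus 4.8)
The plan is to prove the two inequalities separately, with essentially all of the work in the upper bound. Since $m=2n-5$ we have $n+m=3n-5\equiv 1\pmod 3$ and $\lfloor 2(n+m)/3\rfloor = 2n-4$. For the lower bound I would invoke the counting estimate $\gid(K_n\times K_m)\ge\lfloor 2(n+m)/3\rfloor$ coming out of Section~\ref{sec:properties} (the same estimate that supplies the lower bound half of Theorem~\ref{thm:0or2mod3}, and which does not depend on the residue of $n+m$ modulo $3$); specialized to $m=2n-5$ it yields $\gid(K_n\times K_{2n-5})\ge 2n-4$. The substance of the theorem is therefore a construction of an ID code of size exactly $2n-4$, showing that the floor is attained here even though in the generic $n+m\equiv 1\pmod 3$ situation one is pushed up to the ceiling.

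For the upper bound I would exhibit an explicit code $C$ with $|C|=2n-4$ chosen to have $cs(C)=n$ (no empty column) and $rs(C)=2n-6$ (exactly one empty row). Concretely, place three codewords $(n-3,1),(n-2,1),(n-1,1)$ in row $1$, one codeword $(n,2)$, and for each $i$ with $1\le i\le n-4$ a ``domino'' $(i,2i+1),(i,2i+2)$. This occupies rows $1,\dots,2n-6$ and leaves row $m$ empty. Then columns $1,\dots,n-4$ carry two codewords each, columns $n-3,\dots,n$ carry one each, row $1$ carries three, and every other nonempty row carries exactly one. In particular three of the four column-isolated codewords avoid being row-isolated because they share the degree-three row $1$, so $(n,2)$ is the \emph{only} isolated codeword. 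The point is that, after one unit of the row budget is spent on the empty row, the surplus $|C|-rs(C)=2$ is exactly enough to build one row of degree three, and it is that row which rescues three of the four column-isolated codewords; this is the arithmetic coincidence special to $m=2n-5$.

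To verify that $C$ is an ID code I would check domination, separation of same-line pairs, and separation of diagonal pairs. Domination is immediate: a vertex $(i,j)\notin C$ sees $|C|-k_i-\ell_j$ codewords, where $k_i,\ell_j$ count codewords in its column and row, and $k_i\le 2$, $\ell_j\le 3$, so $|C|-k_i-\ell_j\ge 2n-9>0$ for $n\ge 6$. Two vertices sharing a column (respectively a row) fail to be separated only when both of their rows (columns) are empty; since $C$ has one empty row and no empty column, all such pairs are separated. The crux is a pair $x=(i,j)$, $y=(i',j')$ with $i\ne i'$ and $j\ne j'$: by the analysis in Section~\ref{sec:properties}, such a pair is separated precisely when some codeword lies in one of $C_i,C_{i'},R_j,R_{j'}$ but off the four corners $\{i,i'\}\times\{j,j'\}$, and it fails to be separated exactly when these two columns and two rows form a \emph{closed block}, i.e.\ every codeword in $C_i\cup C_{i'}\cup R_j\cup R_{j'}$ sits at a corner.

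I would finish by ruling out closed blocks. Reading the configuration, each nonempty column other than $n-3,n-2,n-1$ lies in a ``piece'' that uses either three columns (the three singletons meeting row $1$) or a single column (a domino, or the isolated codeword $(n,2)$); no piece uses exactly two columns. Hence no choice of two columns and two rows can be closed: a candidate column pair either drags in a third column through row $1$ or reaches at least three rows through the dominoes, and the lone empty row cannot complete a block because there is no empty column to pad with. The single isolated codeword $(n,2)$ is harmless since a closed block built from isolated codewords would require a second one. I expect this closed-block bookkeeping, rather than domination or the construction itself, to be the main obstacle, together with the observation that the surplus of $2$ is exactly what makes one degree-three row available, which is why $m=2n-5$ alone attains the floor $2n-4$.
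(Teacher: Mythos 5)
Your proposal is correct, but it differs from the paper on both halves, and one citation needs repair. For the upper bound, your code is the paper's code $D$ up to a permutation of rows (the paper puts the dominoes in rows $1,\dots,2n-8$, the three column-isolated codewords in row $2n-7$, and the isolated codeword at $(n,2n-6)$), and the paper simply cites Proposition~\ref{prop:partialspan}, whereas you verify domination and the 4-corners condition by hand. Your hand verification is sound, and in fact needed for your ordering: at $n=6$ your code $(1,3),(1,4),(2,5),(2,6),(3,1),(4,1),(5,1),(6,2)$ contains no triple with strictly increasing coordinates in both factors, so condition (1) of Proposition~\ref{prop:partialspan} fails as stated and your direct domination count $|C|-k_i-\ell_j\ge 2n-9>0$ does real work; your empty-row block case is exactly the configuration excluded by Corollary~\ref{cor:3results2}. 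For the lower bound, however, the estimate $\gid(K_n\times K_m)\ge\lfloor 2(n+m)/3\rfloor$ you invoke does not exist as a citable result: Section~\ref{sec:properties} contains no such statement, the $A_c,B_c,x,y,p,q$ machinery lives in Section~\ref{sec:proofs}, the count in Theorem~\ref{thm:0or2mod3} is run under $n+m\equiv 0,2\pmod 3$, and Theorem~\ref{thm:1mod3} explicitly excludes $m=2n-5$. Your residue-independence claim is true but must be carried out: assuming $|C|\le(2n+2m-5)/3$ one gets $x\ge(4m-2n+5)/3$ and $y\ge(4n-2m+5)/3$ for full spans, and $y\ge(4n-2m-1)/3+p$, $x\ge(4m-2n-1)/3+q$ for spans $n-1$, $m-1$, so all four $cs/rs$ cases end in outright numerical contradictions ($p\le-2$, $q\le-2$, or $p+q\le-1$), with none of the delicate equality analysis of Theorem~\ref{thm:0or2mod3}; the paper instead gives a shorter bespoke argument splitting only on $rs(C)\in\{2n-6,2n-5\}$. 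With that verification written out, your proof is complete.
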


\begin{theorem} \label{thm:1mod3}
Let $n$ and $m$ be positive integers such that $6 \le n \le m \le 2n-2$
and  $m \ne 2n-5$.  If $n + m \equiv 1 \pmod{3}$, then
\[\gid(K_n \times K_m) = \left\lceil\frac{2m+2n}{3}\right\rceil\,.\]
\end{theorem}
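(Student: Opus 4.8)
The plan is to establish the two matching bounds separately, exploiting the fact that when $n+m\equiv 1\pmod 3$ the quantity $2(n+m)/3$ is not an integer, so that $\lceil 2(n+m)/3\rceil=\lfloor 2(n+m)/3\rfloor+1$. Accordingly I would split the argument into (a) producing an ID code of size $\lceil 2(n+m)/3\rceil$, and (b) showing that no ID code of size $\lfloor 2(n+m)/3\rfloor$ exists. Part (b) is where the hypothesis $m\neq 2n-5$ must be used, since Theorem~\ref{thm:2nminus5} shows that the floor value \emph{is} attained precisely in the excluded case.

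For the counting behind the lower bound, let $C$ be an ID code and write $k=|C|$. Let $x$ be the number of column-isolated codewords (equivalently, the number of columns meeting $C$ in exactly one vertex) and $y$ the number of row-isolated codewords. Since each of the $cs(C)$ nonempty columns meets $C$, and all but the $x$ singleton ones meet it at least twice, $k\ge 2\,cs(C)-x$, so $x\ge 2\,cs(C)-k$; symmetrically $y\ge 2\,rs(C)-k$. A codeword counted in both $x$ and $y$ is isolated, and by the structural results of Section~\ref{sec:properties} there is at most one isolated codeword, whence $x+y\le k+1$ (and $x+y\le k$ when no isolated codeword is present). Combining these with $cs(C)\ge n-1$ and $rs(C)\ge m-1$ (again from Section~\ref{sec:properties}) yields
\[
3k \;\ge\; 2(n+m)-2\bigl((n-cs(C))+(m-rs(C))\bigr)-\iota,
\]
where $\iota\in\{0,1\}$ counts the isolated codewords. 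In the generic situation where $C$ has no empty row, no empty column, and no isolated codeword, this reads $3k\ge 2(n+m)$; since $3k$ is divisible by $3$ and $2(n+m)\equiv 2\pmod 3$, we may round up to $3k\ge 2(n+m)+1$, that is, $k\ge\lceil 2(n+m)/3\rceil$, as desired.

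The main obstacle is therefore the non-generic case. Setting $k=\lfloor 2(n+m)/3\rfloor$ in the displayed inequality forces $2\bigl((n-cs(C))+(m-rs(C))\bigr)+\iota\ge 2$, and since there is at most one isolated codeword this means $C$ must contain an \emph{empty line} (an empty row or an empty column). The heart of the proof is then a structural analysis of a hypothetical floor-sized code possessing an empty line: I would fix such an empty column $C_{i_0}$ (or empty row) and study how the vertices $(i_0,j)$ must be separated from the vertices in the other columns. Using the separation conditions of Section~\ref{sec:properties} — in particular the incompatibility of an isolated codeword with a simultaneously empty row and column, and the fine constraints governing singleton rows and columns — one shows that accommodating the empty line forces enough additional doubled rows and columns that $k$ cannot be as small as the floor, \emph{unless} the dimensions satisfy $m=2n-5$. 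Carrying out this case analysis, invoking $m\neq 2n-5$ together with $6\le n\le m\le 2n-2$ to rule out boundary degeneracies, is the delicate step and the place where the exceptional family genuinely intervenes.

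For the upper bound I would exhibit an explicit code of size $\lceil 2(n+m)/3\rceil$ meeting the counting above with near-equality: take roughly $2n-k$ singleton columns and $k-n$ doubled columns (and symmetrically for rows), arranged so that no singleton-column codeword lies in a singleton row (hence no isolated codeword) and so that every pair of vertices is separated. Concretely I would use a construction in the same spirit as that for Theorem~\ref{thm:0or2mod3}, adjusted by a single additional codeword to absorb the $+1$ discrepancy between floor and ceiling, and then verify that the resulting set dominates and separates every pair by checking it against the ID-code characterization of Section~\ref{sec:properties}. This verification is routine but case-laden; the conceptual difficulty lies entirely in the lower-bound case analysis described above.
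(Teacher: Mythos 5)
Your counting framework is precisely the paper's: with $x,y$ the numbers of isolated-in-one-direction codewords, $x \ge 2\,cs(C)-k$, $y \ge 2\,rs(C)-k$, at most one isolated codeword, and the mod-$3$ rounding, you correctly dispose of the full-span case ($cs(C)=n$, $rs(C)=m$, with or without an isolated codeword), and you correctly deduce that a hypothetical floor-sized code must have an empty row or empty column. But there is a genuine gap: everything after that deduction --- which is the actual content of the paper's lower-bound argument --- is asserted rather than proved. Your aggregate inequality $3k \ge 2(n+m) - 2\bigl((n-cs(C))+(m-rs(C))\bigr) - \iota$ cannot eliminate the deficient-span cases on its own: for instance, with $cs(C)=n$, $rs(C)=m-1$ and no isolated codeword it reads $3k \ge 2(n+m)-2$, which the floor value $k=\frac{2(n+m)-2}{3}$ satisfies with equality, so no contradiction is available at this level of counting. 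Closing these cases requires a strictly finer mechanism that your sketch never sets up: by the 4-corners property (Corollary~\ref{cor:3results2}), when a span is deficient, a line meeting $C$ in exactly two codewords that are both isolated in the transverse direction is forbidden, so any such line carries at least three codewords; the paper counts these lines ($p$ columns, $q$ rows), upgrades the bounds to $y \ge \frac{4n-2m-4}{3}+p$ and $x \ge \frac{4m-2n-4}{3}+q$, and then runs an intricate subcase analysis (forcing $p\le 1$, $q\le 1$, extracting $C=A_c\cup B_c$ from equality, locating the unique isolated or doubly-non-isolated codeword, and computing $cs(A_c)$ or $rs(B_c)$ to reach contradictions). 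The hypothesis $m \ne 2n-5$ enters at exactly one such point --- the case $cs(C)=n$, $rs(C)=m-1$ with $q=1$, where equality forces $m-3 = \frac{4m-2n-1}{3}-1$, i.e.\ $m=2n-5$ --- and nothing in your proposal recovers this. Your sentence that the empty line ``forces enough additional doubled rows and columns that $k$ cannot be as small as the floor, unless $m=2n-5$'' is a restatement of the theorem in the hard case, not an argument for it.

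The upper bound is also left implicit, though this is the more forgivable omission. Your instinct --- take the construction underlying Theorem~\ref{thm:0or2mod3} and add one codeword to absorb the floor/ceiling discrepancy, arranged so no codeword is isolated --- matches what the paper does: it exhibits $D_1 = \{(1,1)\}\cup\{(i,2i),(i,2i+1) \mid 1\le i\le a\}\cup\{(a+2j-1,2a+j+1),(a+2j,2a+j+1) \mid 1\le j\le b\}$ with $a=\frac{2m-n-2}{3}$, $b=\frac{2n-m+1}{3}$, plus a separate set for $m=2n-2$, and verifies the hypotheses of Proposition~\ref{prop:fullspan}. Writing this out would be routine. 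The verdict therefore rests on the lower bound: your proposal is a faithful outline of the paper's strategy whose decisive step --- the $p,q$-refined analysis of the three deficient-span cases, where the exceptional family $m=2n-5$ genuinely intervenes --- is missing.
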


\section{Preliminary Properties} \label{sec:properties}

In this section we prove a number of results that will be useful in proving the
minimum size of ID codes in the direct product of two complete graphs.  It will
be helpful in what follows to remember that a vertex is adjacent to
$(i,j)$ in $K_n \times K_m$ precisely when its first coordinate is
different from $i$ and its second coordinate is different from $j$. Also, recall
that we are assuming throughout that $n \le m$.

\begin{lemma} \label{lem:lowerbound}
If $C$ is an identifying code of $K_n \times K_m$, then $cs(C) \ge n-1$
and $rs(C) \ge m-1$.  In particular, $|C| \ge m-1$.
\end{lemma}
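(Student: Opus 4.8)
The plan is to prove the two span bounds separately and then observe that the cardinality bound follows immediately. Consider an identifying code $C$ of $K_n \times K_m$. The central idea is that vertices sharing a column (or row) are quite constrained in how the code can separate them, because adjacency in the direct product requires \emph{both} coordinates to differ. I would first establish $rs(C) \ge m-1$ and then argue $cs(C) \ge n-1$ by the symmetric argument (with $n$ in place of $m$).

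To prove $rs(C) \ge m-1$, I would argue by contradiction, supposing $rs(C) \le m-2$, so there are at least two rows, say $R_j$ and $R_{j'}$, containing no codeword. First I would use that $C$ must dominate every vertex. The key observation is how the closed neighborhood of a vertex $v=(i,j)$ meets $C$: since $v$ is adjacent precisely to vertices differing in both coordinates, $N[v] \cap C$ consists of $v$ itself (if $v \in C$) together with all codewords lying neither in column $C_i$ nor in row $R_j$. If $R_j$ contains no codeword, then for every $i \in [n]$ the set $N[(i,j)] \cap C$ is just the set of all codewords avoiding column $C_i$; this depends only on which codewords sit in $C_i$. The plan is to exploit this to manufacture two vertices with identical intersections. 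Concretely, with two empty rows $R_j, R_{j'}$, I would compare the pair $(i,j)$ and $(i,j')$ for a suitable column $i$: both have $N[\cdot]\cap C = C \setminus C_i$ (no codeword lies in either empty row, and neither vertex is itself a codeword), so they are not separated, contradicting that $C$ is an identifying code.

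I expect the main obstacle to be handling the domination requirement and the edge cases that arise when columns are nearly empty. In particular, I would need to verify that a vertex in an empty row is actually dominated, which forces there to exist at least one codeword avoiding its column; this is where the lower bound on $cs(C)$ or a counting argument enters. I would also need to treat carefully the situation where the two chosen vertices might accidentally lie in a column containing all remaining codewords, so I would select the column $i$ to guarantee $C \setminus C_i \ne \emptyset$, which is possible as long as the codewords are not all concentrated in a single column. The symmetric statement $cs(C) \ge n-1$ follows by interchanging the roles of rows and columns (equivalently, by the natural isomorphism $K_n \times K_m \cong K_m \times K_n$).

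Finally, the cardinality bound is immediate: since each row in the row span contributes at least one distinct codeword, $|C| \ge rs(C) \ge m-1$. I would state this last line as a one-sentence consequence rather than a separate argument.
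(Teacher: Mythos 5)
Your proposal is correct and takes essentially the same route as the paper: assuming two codeword-free rows $R_r, R_s$, one observes $N[(i,r)] \cap C = C - C_i = N[(i,s)] \cap C$ for any fixed $i$, contradicting separation, then argues the column bound symmetrically and concludes $|C| \ge rs(C) \ge m-1$ since distinct rows are disjoint. The edge cases you flag as obstacles are actually vacuous---the equality of the two intersections violates the separation property even when $C - C_i = \emptyset$, so no domination check or careful choice of column is needed, and the paper's proof accordingly picks an arbitrary $i$.
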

\begin{proof}
Suppose that for some $r \ne s$, $C \cap R_r = \emptyset = C \cap R_s$.
Then for any fixed $i \in [n]$, $C \cap N[(i, r)] = C- C_i = C \cap N[(i, s)]$.
Since this violates $C$ being an ID code, $K_n \times K_m$ has at most one
row disjoint from $C$.  A similar argument shows that $K_n \times K_m$ has
no more than one column disjoint from $C$. Consequently, $|C| \ge m-1$.
\end{proof}

By considering $N[x]$, the following result is obvious but useful.  We omit
its proof.

\begin{lemma} \label{lem:rowandcolseparation}
Suppose $C \subseteq V(K_n \times K_m)$ and let $x = (i, r) \in C$.  Then
$C$ separates $x$ from any $y \in (R_r \cup C_i) - \{x\}$.
\end{lemma}

Lemma~\ref{lem:rowandcolseparation} addresses separating two vertices that
belong to the same row or to the same column.  The next result concerns
vertices that are not in a common row or common column, that is, two vertices
at opposite ``corners'' of a two-row and two-column configuration in $K_n \times K_m$.

\begin{lemma} (4-Corners Property) \label{lem:4corners}
 Suppose $C$ is a dominating set of $K_n \times K_m$. For each
$(i, r), (j, s) \in K_n \times K_m$  with $i \ne j, r \ne s$, $C$ separates
$(i, r)$ and $(j, s)$ if and only if
$$C \cap(C_i \cup C_j \cup R_r \cup R_s)\not\subseteq \{i, j\} \times \{r, s\}.$$
\end{lemma}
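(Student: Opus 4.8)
The plan is to reduce the separation question to a single explicit computation of the symmetric difference of the two closed neighborhoods. By the definition of separation, $C$ separates $x=(i,r)$ and $y=(j,s)$ precisely when $N[x]\cap C\ne N[y]\cap C$, and this holds if and only if $C$ meets the symmetric difference $N[x]\triangle N[y]$ (a codeword fails to distinguish the two vertices exactly when it lies in both neighborhoods or in neither). So the entire statement follows once I identify this symmetric difference explicitly. It is worth noting that the domination hypothesis plays no role in this equivalence; it is carried along only because in our setting $C$ is always a dominating set.

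To compute $N[x]\triangle N[y]$, I would use the adjacency rule of the direct product: $(a,b)\in N[(i,r)]$ exactly when $(a,b)=(i,r)$, or both $a\ne i$ and $b\ne r$, and similarly for $N[(j,s)]$. I would then partition the vertex set according to a $3\times 3$ scheme, based on whether the first coordinate lies in $\{i\}$, in $\{j\}$, or outside $\{i,j\}$, and whether the second coordinate lies in $\{r\}$, in $\{s\}$, or outside $\{r,s\}$, and check membership in each neighborhood region by region. The decisive checks are two. First, a vertex whose first coordinate avoids $\{i,j\}$ and whose second coordinate avoids $\{r,s\}$ is adjacent to both $x$ and $y$, hence lies in both closed neighborhoods and never separates. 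Second, each of the four corner vertices $\{i,j\}\times\{r,s\}$ also fails to separate: since $i\ne j$ and $r\ne s$, the vertices $(i,r)$ and $(j,s)$ are mutually adjacent, so each lies in both neighborhoods, while $(i,s)$ and $(j,r)$ lie in neither.

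This bookkeeping would show that the symmetric difference is exactly
\[
N[x]\triangle N[y]=\bigl(C_i\cup C_j\cup R_r\cup R_s\bigr)\setminus\bigl(\{i,j\}\times\{r,s\}\bigr),
\]
that is, the ``cross'' formed by columns $i,j$ and rows $r,s$ with its four corners deleted. Substituting this description into the separation criterion yields the conclusion: $C$ separates $(i,r)$ and $(j,s)$ if and only if $C$ contains a non-corner vertex of this cross, which is precisely the assertion $C\cap(C_i\cup C_j\cup R_r\cup R_s)\not\subseteq\{i,j\}\times\{r,s\}$.

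I do not expect a genuine obstacle; the substance of the argument is the correct determination of the symmetric difference. The one mildly counterintuitive point to get right is that the two ``opposite'' corners $(i,r)$ and $(j,s)$ are adjacent in the direct product and therefore each belongs to the other's closed neighborhood, which is exactly why they contribute nothing to separating $x$ from $y$.
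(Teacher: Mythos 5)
Your proposal is correct and takes essentially the same route as the paper: the paper computes $C\cap N[x]$ and $C\cap N[y]$ using exactly your cross/corners/outside decomposition and observes that the two traces agree except on $\bigl(C_i\cup C_j\cup R_r\cup R_s\bigr)\setminus\bigl(\{i,j\}\times\{r,s\}\bigr)$, which is just your symmetric-difference identity with $C$ intersected in at the end rather than at the start. Your side remark is also accurate --- the paper's proof likewise never invokes the domination hypothesis, which is carried in the statement only for the context in which the lemma is applied.
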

\begin{proof}
Suppose that $i\ne j$ and $r\ne s$ and let $C_i, C_j$ and $R_r, R_s$ be
the corresponding columns and rows of $K_n \times K_m$.
Write $x=(i, r), y=(j, s), w=(i, s)$ and $z=(j, r)$ and define
\begin{eqnarray*}
A &=& C-(C \cap (C_i\cup C_j\cup R_r \cup R_s))\\
B&=& \left[C \cap (C_i\cup C_j \cup R_r \cup R_s)\right] - \{x, y, w, z\}.
\end{eqnarray*}
Then
\begin{eqnarray*}
C \cap N[x] &=& A \cup (C \cap \{x, y\}) \cup (C \cap ((R_s \cup C_j)-\{x,y,w, z\}))\\
C \cap N[y] &=& A \cup (C \cap \{x, y\}) \cup (C \cap ((R_r \cup C_i)-\{x,y,w, z\}))
\end{eqnarray*}
Therefore, $C$ separates $x$ and $y$ if and only if at least one of the two disjoint sets
$C \cap ((R_s \cup C_j)-\{x,y,w, z\})$ or $C \cap ((R_r \cup C_i)-\{x,y,w,z\})$ is non-empty.
Since $B$ is the union of these 2 sets, it follows that
$C$ separates $x$ and $y$ if and only if $B\not=\emptyset$, or equivalently if and only if
\[ C \cap(C_i \cup C_j \cup R_r \cup R_s)\not\subseteq \{i, j\} \times \{r, s\}\,.\]
\end{proof}

\noindent We will say that a dominating set $D$ of $K_n \times K_m$ has the {\it 4-corners property
with respect to columns $C_i$, $C_j$ and rows $R_r$, $R_s$} if
\[D \cap (C_i \cup C_j \cup R_r \cup R_s)\not\subseteq \{i, j\} \times \{r, s\}\,.\]

\noindent Hence, if a dominating set $D$ of $K_n \times K_m$ is an ID code, then $D$ has the
4-corners property with respect to every pair of columns and every pair of rows.
Each of the next three results follows immediately from this fact.

\begin{cor} \label{cor:3results1}
If $C$ is an identifying code of $K_n \times K_m$, then
$C$ has no more than one isolated codeword.
\end{cor}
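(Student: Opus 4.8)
The plan is to argue by contradiction, with the 4-corners property (Lemma~\ref{lem:4corners}) doing essentially all the work. Suppose $C$ is an identifying code that contains two distinct isolated codewords $x=(i,r)$ and $y=(j,s)$. The first step is to verify that $x$ and $y$ cannot lie in a common row or a common column. Indeed, if $i=j$, then $x$ and $y$ would both lie in the column $C_i$; but $x$ being column-isolated forces $C\cap C_i=\{x\}$, contradicting $y\in C\cap C_i$. The analogous argument with rows rules out $r=s$. Hence $i\ne j$ and $r\ne s$, so $x$ and $y$ occupy opposite corners of a genuine two-row, two-column configuration, which is exactly the setting in which the 4-corners property is phrased.

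The second step is to compute $C\cap(C_i\cup C_j\cup R_r\cup R_s)$ directly from the isolation hypotheses. Since $x$ is isolated we have $C\cap C_i=\{x\}$ and $C\cap R_r=\{x\}$, and since $y$ is isolated we have $C\cap C_j=\{y\}$ and $C\cap R_s=\{y\}$. Taking the union of these four one-element sets gives $C\cap(C_i\cup C_j\cup R_r\cup R_s)=\{x,y\}$. But $\{x,y\}=\{(i,r),(j,s)\}\subseteq\{i,j\}\times\{r,s\}$, so the intersection is contained in the forbidden corner set. As noted immediately before the corollary, every identifying code must satisfy the 4-corners property with respect to every pair of columns and every pair of rows; the containment just derived violates it for the columns $C_i,C_j$ and rows $R_r,R_s$. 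This contradiction shows that $C$ cannot contain two distinct isolated codewords, and hence has at most one.

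There is no substantive obstacle in this argument; the computation collapses once isolation is unpacked. The only point that requires care — and the step a hurried proof might omit — is the preliminary reduction to $i\ne j$ and $r\ne s$. The 4-corners property is only asserted for vertices in distinct rows and distinct columns, so one must first exclude the degenerate case of two isolated codewords sharing a line before the property can legitimately be applied.
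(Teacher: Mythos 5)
Your proof is correct and takes essentially the same route as the paper: the paper derives this corollary ``immediately'' from the fact that an ID code has the 4-corners property with respect to every pair of columns and rows, and your argument is precisely that observation spelled out, including the preliminary (and genuinely necessary) reduction to $i \ne j$ and $r \ne s$ before Lemma~\ref{lem:4corners} can be invoked. No gaps; nothing further is needed.
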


\begin{cor} \label{cor:3results2}
Let $C$ be an identifying code of $K_n \times K_m$.
If $cs(C) = n-1$, then there does not exist a column $C_j$ such that
$C \cap C_j = \{u,v\}$ where both $u$ and $v$ are row-isolated.
Similarly, there is no row $R_r$ containing exactly two codewords each of
which is column-isolated if $rs(C) = m-1$.
\end{cor}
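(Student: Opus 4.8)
The plan is to argue by contradiction, exploiting the observation recorded just before the statement: any ID code must have the 4-corners property with respect to \emph{every} pair of columns and every pair of rows. Thus it suffices to exhibit a single quadruple consisting of two columns and two rows for which the 4-corners property fails.

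For the first claim, suppose $cs(C) = n-1$ and that some column $C_j$ satisfies $C \cap C_j = \{u, v\}$ with $u = (j, r)$ and $v = (j, s)$ both row-isolated. Since $cs(C) = n-1$, Lemma~\ref{lem:lowerbound} guarantees that exactly one column is disjoint from $C$; call it $C_k$, so $k \ne j$. I would then apply the 4-corners property to the columns $C_k, C_j$ and the rows $R_r, R_s$. Because $C_k$ contributes no codewords, $C \cap C_j = \{u, v\}$, and $u, v$ being row-isolated forces $C \cap R_r = \{u\}$ and $C \cap R_s = \{v\}$, the intersection collapses to
\[
C \cap (C_k \cup C_j \cup R_r \cup R_s) = \{(j, r), (j, s)\} \subseteq \{k, j\} \times \{r, s\}\,.
\]
This is precisely the failure of the 4-corners property for this quadruple, so by Lemma~\ref{lem:4corners} the set $C$ does not separate $(k, r)$ from $(j, s)$, contradicting that $C$ is an ID code.

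The second claim follows by the symmetric argument with the roles of rows and columns interchanged: assuming $rs(C) = m-1$ and a row $R_r$ with $C \cap R_r = \{u, v\}$ where both are column-isolated, I would take $R_l$ to be the unique row disjoint from $C$ (which exists by Lemma~\ref{lem:lowerbound} since $rs(C) = m-1$) and apply the 4-corners property to the two columns containing $u$ and $v$ together with the rows $R_l, R_r$, obtaining the same kind of containment in the corresponding corner set.

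I do not anticipate a genuine obstacle here; the content is a direct application of Lemma~\ref{lem:4corners}. The only points requiring care are, first, confirming that the hypothesis $cs(C) = n-1$ (respectively $rs(C) = m-1$) actually produces an empty column (respectively empty row), and second, choosing that empty column (respectively row) as one of the two ``corner'' columns (respectively rows) — this is exactly the choice that forces the relevant intersection to shrink into the forbidden set $\{k,j\}\times\{r,s\}$.
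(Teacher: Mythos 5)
Your proposal is correct and follows exactly the route the paper intends: the paper gives no separate proof of this corollary, stating only that it ``follows immediately'' from the fact that an ID code has the 4-corners property with respect to every pair of columns and rows, and your argument---pairing the unique empty column (or row) with the offending column (or row) so that the intersection collapses into $\{k,j\}\times\{r,s\}$, contradicting Lemma~\ref{lem:4corners}---is precisely that immediate application spelled out. The only cosmetic quibble is that the existence of exactly one empty column is just the definition of $cs(C)=n-1$ rather than a consequence of Lemma~\ref{lem:lowerbound} (which supplies only the bound $cs(C)\ge n-1$), but this does not affect the argument.
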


\begin{cor} \label{cor:3results3}
If $C$ is an identifying code of $K_n \times K_m$ such that $cs(C) = n-1$
and $rs(C) = m-1$, then $C$ has no isolated codeword.
\end{cor}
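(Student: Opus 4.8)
The plan is to argue by contradiction, exhibiting a single pair of columns together with a single pair of rows for which $C$ fails the 4-corners property. Since every ID code must satisfy the 4-corners property with respect to \emph{every} pair of columns and every pair of rows, producing one such failing configuration immediately gives the result. So I would assume toward a contradiction that $C$ has an isolated codeword $v = (i, r)$, meaning $C \cap C_i = \{v\}$ and $C \cap R_r = \{v\}$, and then locate the right columns and rows to test.

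The hypotheses supply exactly the columns and rows I need. Because $cs(C) = n-1$, exactly one column of $K_n \times K_m$ is disjoint from $C$; call it $C_j$. Since $v$ is a codeword lying in $C_i$, this empty column satisfies $j \ne i$. Symmetrically, $rs(C) = m-1$ forces a unique empty row $R_s$, and $s \ne r$ because $v \in R_r \cap C$. I would note that this is the only point requiring the slightest care: one must check that the empty column and empty row avoid the column and row of $v$, and this is immediate from $v$ being a codeword in both $C_i$ and $R_r$.

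I would then apply the 4-corners property (Lemma~\ref{lem:4corners}) with respect to the columns $C_i, C_j$ and the rows $R_r, R_s$. Evaluating the relevant intersection, isolation of $v$ gives $C \cap C_i = \{v\}$ and $C \cap R_r = \{v\}$, while the choice of the empty column and row gives $C \cap C_j = \emptyset = C \cap R_s$. Consequently $C \cap (C_i \cup C_j \cup R_r \cup R_s) = \{(i,r)\}$, and since $i \in \{i,j\}$ and $r \in \{r,s\}$, this singleton is contained in $\{i,j\} \times \{r,s\}$. That containment is exactly the negation of the 4-corners property for this configuration, contradicting the assumption that $C$ is an ID code and completing the proof. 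The argument is short; essentially all of its content lies in recognizing that $cs(C) = n-1$ and $rs(C) = m-1$ furnish precisely the empty column and empty row that, paired with an isolated codeword, collapse the four-corner intersection down to a single point.
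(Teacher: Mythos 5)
Your proof is correct and matches the paper's intended argument: the paper derives this corollary ``immediately'' from the fact that an ID code has the 4-corners property with respect to every pair of columns and rows, and your instantiation---pairing the isolated codeword's column and row with the unique empty column $C_j$ and empty row $R_s$ so that $C \cap (C_i \cup C_j \cup R_r \cup R_s) = \{(i,r)\} \subseteq \{i,j\} \times \{r,s\}$---is exactly the configuration the authors have in mind. You also correctly flag the one point of care (that $j \ne i$ and $s \ne r$, which holds since $v$ is a codeword), so nothing is missing.
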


The next two results will be used to construct ID codes thereby providing
an upper bound for $\gid(K_n \times K_m)$.  Which one is used will depend on
the congruence of $n+m$ modulo 3.

\begin{proposition} \label{prop:fullspan}
Let $C \subset  V(K_n \times K_m)$. Then $C$ is an identifying code of $K_n \times K_m$
if it satisfies the following conditions.
\begin{enumerate}
\item[(1)] There exist $1 \le n_1 < n_2 < n_3 \le n$ and $1 \le m_1 < m_2 < m_3 \le m$  such that\\
 $(n_1, m_1), (n_2,m_2), (n_3, m_3) \in C$;
\item[(2)] Each $v \in C$ is either row-isolated or column-isolated;
\item[(3)] $rs(C) = m$ and $cs(C) = n$; and
\item[(4)] $C$ contains at most one isolated vertex.
\end{enumerate}
\end{proposition}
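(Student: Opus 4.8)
The plan is to verify separately that $C$ is a dominating set and that $C$ separates every pair of distinct vertices. For domination I would use only condition~(1). If some vertex $(i,j)$ were not dominated, then $(i,j)\notin C$ and every codeword would lie in $C_i\cup R_j$. The three codewords $(n_1,m_1),(n_2,m_2),(n_3,m_3)$ supplied by~(1) occupy three distinct columns and three distinct rows, so at most one of them can lie in $C_i$ and at most one in $R_j$; hence at least one of the three lies in neither, and that codeword is adjacent to $(i,j)$, a contradiction. I would also remark that condition~(1) is genuinely needed here, since the ``cross'' $C=(C_i\cup R_j)\setminus\{(i,j)\}$ satisfies~(2)--(4) yet fails to dominate $(i,j)$.

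With domination in hand, I turn to separation and first dispose of pairs $x,y$ lying in a common row or column. If one of them, say $x$, is a codeword, then Lemma~\ref{lem:rowandcolseparation} separates the pair at once. If neither is a codeword and, say, $x=(i,r)$ and $y=(i,s)$ share the column $C_i$, then condition~(3) gives $rs(C)=m$, so $R_r$ contains a codeword; since $x\notin C$ this codeword lies outside $C_i$, whence it belongs to $N[x]\cap C$ but not to $N[y]\cap C$ and separates $x$ from $y$. The common-row case is symmetric, using $cs(C)=n$.

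The remaining and most delicate case is a pair $x=(i,r)$, $y=(j,s)$ with $i\ne j$ and $r\ne s$, which I handle through the 4-Corners Property; note that Lemma~\ref{lem:4corners} applies because $C$ has already been shown to dominate. Suppose for contradiction that $C$ does not separate $x$ and $y$; by Lemma~\ref{lem:4corners} every codeword in $C_i\cup C_j\cup R_r\cup R_s$ is then one of the four corners $x$, $y$, $w=(i,s)$, $z=(j,r)$. Condition~(3) forces each of $C_i$, $C_j$, $R_r$, $R_s$ to contain a corner codeword, and because all relevant codewords are corners, a corner is row- (respectively column-) isolated in $C$ precisely when it is the only corner codeword in its row (respectively column). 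Condition~(2) therefore forbids any corner from sharing both its row and its column with other corner codewords. Enumerating the $2\times 2$ patterns of corner codewords that meet these restrictions and leave every line occupied leaves only the two diagonals $\{x,y\}$ and $\{w,z\}$; but in either diagonal pattern both corner codewords are isolated in $C$, contradicting condition~(4). Hence the 4-Corners Property holds and $x$ and $y$ are separated.

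I expect the third paragraph to be the crux: the bookkeeping that collapses the failure configurations down to a single diagonal pair is exactly where conditions~(2), (3) and~(4) must be made to interact, and carrying out the case analysis of the $2\times 2$ corner pattern completely and cleanly is the main obstacle, whereas domination and the common-line separations are routine.
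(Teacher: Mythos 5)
Your proof is correct, and its skeleton matches the paper's for the easy parts: domination from the three ``diagonal'' codewords of condition~(1), and same-row/same-column pairs via Lemma~\ref{lem:rowandcolseparation} when a codeword is involved and via the full row and column spans otherwise. Where you genuinely diverge is in the cross-pair case $x=(i,r)$, $y=(j,s)$ with $i\ne j$, $r\ne s$. The paper splits this into three subcases---$x$ a non-isolated codeword, $x$ an isolated codeword, and $x,y\notin C$---and in the first two it constructs an explicit separating codeword by hand, invoking the 4-corners containment only in the third. You instead run all cross pairs uniformly through Lemma~\ref{lem:4corners}: a failure of separation confines all codewords of $C_i\cup C_j\cup R_r\cup R_s$ to the four corners, condition~(3) forces the corner set $S=C\cap\{x,y,w,z\}$ to hit all four lines, condition~(2) forbids any corner of $S$ from sharing both its row and its column with another corner (your enumeration correctly eliminates all triples and the quadruple, and all non-diagonal pairs fail line coverage), and each surviving diagonal $\{x,y\}$ or $\{w,z\}$ yields two isolated codewords, contradicting~(4). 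This is a legitimate and arguably cleaner route: it shows conditions~(2)--(4) interact in exactly one place and avoids the paper's separate bookkeeping for codeword pairs, at the cost of not exhibiting explicit separating vertices the way the paper's cases (a) and (b) do. Your cross example $C=(C_i\cup R_j)-\{(i,j)\}$ showing that (2)--(4) alone do not imply domination is a worthwhile addition not present in the paper.

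One small correction to your second paragraph: the codeword $z\in C\cap R_r$, $z\ne x$, lies in the same row as $x$, and vertices sharing a row are \emph{nonadjacent} in the direct product; hence $z\in N[y]\cap C$ but $z\notin N[x]\cap C$, the reverse of what you wrote. Since separation is symmetric in $x$ and $y$, the conclusion that $z$ separates the pair is unaffected, but the membership claim should be fixed.
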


\begin{proof}
Assume $C$ is as specified.  For ease of reference we denote the
graph $K_n \times K_m$ by $G$ throughout this proof. By the first assumption
above it follows immediately that $C$ dominates $G$ since
$\{(n_1,m_1), (n_2,m_2), (n_3,m_3)\}$ does.

We need only show that $C$ separates every pair $x, y$ of distinct vertices.  First assume that
$x$ and $y$ are in the same column.  If $x$ or $y$ belongs to $C$, then
Lemma~\ref{lem:rowandcolseparation} shows that $C$ separates them.  If neither is in $C$,
then by our assumptions $rs(C) = m$ and $cs(C) = n$
we can choose a vertex $z\in C$ from the same row as $x$.  This vertex $z$ separates $x$ and $y$.
Similarly, $C$ separates any two vertices belonging to a common row.

Now, assume $x=(i,r)$ and $y=(j,s)$ where  $1 \le i< j \le n$ and $1 \le r<s \le m$.
Any $v = (k, t) \in C$ that is not isolated in $C$
is row-isolated or column-isolated but not both, and it follows that either $|C \cap C_k| \ge 2$
or $|C \cap R_t| \ge 2$.
\begin{enumerate}
\item[(a)] Suppose $x \in C$ but is not isolated in $C$. Then as above, either $|C \cap C_i| \ge 2$ or
$|C \cap R_r|\ge 2$. Assume without loss of generality that
$|C\cap C_i|\ge 2$. Then either $(i,s) \in C$ or there exists $1 \le t \le m$ where $t \not\in \{r, s\}$
and $(i, t) \in C$. In the first case where we have $(i, s) \in C$, it follows that $(i,s)$ is
row-isolated, and thus $y \not\in C$. However,
each column of $G$ is in the column span of $C$ so  there exists $1 \le p \le m$ where
$p \not\in \{ r, s\}$ and $(j, p) \in C$ since $(i, r)$ and $(i, s)$ are row-isolated. Thus
$(j, p) \in C \cap N[x]$ but $(j, p) \not\in C \cap N[y]$ and hence $C$ separates $x$ and $y$.
On the other hand, if there exists $1 \le t \le m$ where $t \not\in \{r, s\}$ and $(i, t) \in C$,
then $(i, t) \in C \cap N[y]$ but $(i, t) \not\in C \cap N[x]$ and hence $C$ separates $x$ and $y$.
If we had instead assumed that $|C \cap R_r| \ge2$, that is we had assumed $x$ is column-isolated
and not row-isolated, then a similar argument shows that $C$ separates $x$ and $y$.

\item[(b)] Suppose $x\in C$ and is isolated in $C$. Since $x$ is both row-isolated and column-isolated
$C = C \cap N[x]$. First assume that $y \not\in C$. Since $C_j$ is in the column span
 of $C$,  there exists $1 \le t \le m$ with $t \not\in \{ r, s\}$ such that $(j, t) \in C$,
and $(j, t)$  separates $x$ and $y$. On the other hand, if $y \in C$
 then either $|C \cap C_j| \ge 2$ or $|C \cap R_s|\ge2$ since $y$ is not isolated. In either case,
 $C \cap N[y] \ne C$ and therefore $C$ separates $x$ and $y$.

\item[(c)] Suppose $x, y \in V(G) - C$. If we assume that $C$ does not separate $x$ and $y$, then
because each row of $G$ is in the row span of $C$ and each column of $G$ is in the column span of
$G$, it follows that
$$C \cap (C_i \cup C_j \cup R_r \cup R_s) = \{(i,s), (j, r)\}\,.$$
Thus by definition, both $(i, s)$ and $(j, r)$ are isolated in $C$, contradicting the
fourth assumption. Hence, $C$ separates $x$ and $y$.
\end{enumerate}
Therefore $C$ separates every pair of distinct vertices, and thus $C$ is an ID code of $K_n \times K_m$.
\end{proof}

\begin{proposition} \label{prop:partialspan}
Let $C \subset  V(K_n \times K_m)$.  Then $C$ is an identifying code of $K_n \times K_m$
if it satisfies the following conditions.
\begin{enumerate}
\item[(1)] There exist $1 \le n_1 < n_2 < n_3 \le n$ and $1 \le m_1 < m_2 < m_3 \le m$ such that\\
$(n_1, m_1), (n_2,m_2), (n_3, m_3) \in C$;
\item[(2)] Every $v \in C$ is either row-isolated or column-isolated;
\item[(3)] $rs(C) = m-1$ and $cs(C) = n$;
\item[(4)] $C$ contains at most one isolated vertex; and
\item[(5)] If $R_r$ has the property that every $v \in C \cap R_r$ is column-isolated
           but not row-isolated, then $|C \cap R_r| \ge 3$.
\end{enumerate}
\end{proposition}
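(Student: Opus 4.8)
The plan is to follow the template of Proposition~\ref{prop:fullspan}, since conditions (1)--(4) here are identical; the only structural change is that exactly one row, say $R_{r_0}$, now contains no codeword (because $rs(C)=m-1$ rather than $m$), and condition (5) is the new hypothesis inserted to compensate for this deficiency. Throughout I write $G=K_n\times K_m$. First I would dispose of domination exactly as before: by condition (1), any vertex $(i,j)$ has, among the three codewords $(n_1,m_1),(n_2,m_2),(n_3,m_3)$, at least one whose two coordinates both differ from $i$ and from $j$ (at most one can agree in the first coordinate and at most one in the second), so $C$ dominates $G$.

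Next I would separate pairs lying in a common line. For two vertices $x=(i,r)$, $y=(j,r)$ of a common row the argument is verbatim that of Proposition~\ref{prop:fullspan}: if one of them is a codeword apply Lemma~\ref{lem:rowandcolseparation}, and otherwise use $cs(C)=n$ to pick a codeword $(i,t)\in C$ with $t\ne r$, which lies in $N[y]$ but not in $N[x]$. For two vertices $x=(i,r)$, $y=(i,s)$ of a common column the only new point is that a row may be empty; but since at most one row is empty, at least one of $R_r,R_s$ contains a codeword, and a codeword in that row (necessarily off column $C_i$, since $x,y\notin C$) separates $x$ and $y$.

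The substantive case is a pair $x=(i,r)$, $y=(j,s)$ with $i\ne j$ and $r\ne s$, which I would handle through the $4$-Corners Property (Lemma~\ref{lem:4corners}): it suffices to show $C\cap(C_i\cup C_j\cup R_r\cup R_s)\not\subseteq\{i,j\}\times\{r,s\}$. Assume for contradiction that every codeword in these four lines is one of the four corners. Because $cs(C)=n$, each of $C_i$ and $C_j$ contains a (corner) codeword, and I would then split on how many of $x,y$ lie in $C$. If neither does, the corner codewords are forced to be exactly $(i,s)$ and $(j,r)$, both of which are then isolated, contradicting condition (4)---this is identical to part (c) of Proposition~\ref{prop:fullspan}. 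If both $x$ and $y$ lie in $C$, then either there is a third corner codeword, which is then neither row- nor column-isolated and violates condition (2), or there is no third one, in which case $x$ and $y$ are both isolated and violate condition (4).

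The genuinely new configuration---and the step I expect to be the main obstacle---is when exactly one of $x,y$ is a codeword, say $x\in C$ and $y\notin C$ (the other case is symmetric, with the roles of $R_r$ and $R_s$ exchanged). Since $C_j$ is nonempty and $y=(j,s)\notin C$, the forced corner codeword in $C_j$ is $(j,r)$. Now either $(i,s)\in C$, in which case $x=(i,r)$ is neither row- nor column-isolated (its row and its column each carry two corner codewords), contradicting condition (2); or $(i,s)\notin C$, in which case the only corner codewords are $(i,r)$ and $(j,r)$. Then $R_s$ contains no codeword, so $R_s=R_{r_0}$ is the missing row, while $C\cap R_r=\{(i,r),(j,r)\}$ and each of these two codewords is column-isolated (alone in its column) but not row-isolated. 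This is exactly the hypothesis of condition (5) for the row $R_r$, which forces $|C\cap R_r|\ge 3$ and contradicts $|C\cap R_r|=2$. Hence the assumption fails, Lemma~\ref{lem:4corners} yields separation of $x$ and $y$, and $C$ is an ID code. The crux is recognizing that the deficiency $rs(C)=m-1$ can bite only in this last configuration, and that condition (5) is precisely engineered to close it.
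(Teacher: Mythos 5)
Your proof is correct, and it takes a recognizably different route from the paper's. The paper's central device is a reduction: letting $R_r$ be the codeword-free row, it observes that $V(G)-R_r\cong K_n\times K_{m-1}$ and that $C$, viewed inside this subgraph, satisfies all the hypotheses of Proposition~\ref{prop:fullspan} (since $C\cap R_r=\emptyset$, closed-neighborhood intersections with $C$ are unchanged by deleting $R_r$); this disposes of every pair avoiding the missing row in one stroke, leaving only pairs with a vertex in $R_r$, which are then handled by the column span and by the 4-Corners Property plus condition (5). You instead re-prove everything from scratch, organizing the diagonal case uniformly through Lemma~\ref{lem:4corners} and splitting on $|\{x,y\}\cap C|\in\{0,1,2\}$ rather than on position relative to the missing row. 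Both analyses bottom out in the same critical configuration --- a row whose codewords are exactly two column-isolated, non-row-isolated vertices, killed by condition (5), with the adjacent ``row'' $R_s$ (in your labeling) forced to be the empty one --- so the essential content coincides; what differs is the bookkeeping. The paper's reduction buys brevity and makes clear that only the missing row introduces new work; your version buys self-containedness, avoids having to verify that the hypotheses of Proposition~\ref{prop:fullspan} transfer to the induced subgraph, and isolates cleanly which hypothesis closes each subcase (condition (2) for a third corner codeword, condition (4) for two isolated corners, condition (5) for the two-codeword row). One small point of care that you got right but stated tersely: in your ``neither in $C$'' subcase, the forced corners $(i,s)$ and $(j,r)$ are derived purely from $cs(C)=n$ and the 4-corners assumption, and the contradiction with condition (4) goes through even when one of $R_r,R_s$ happens to be the empty row, since the derivation proceeds inside the contradiction hypothesis; it would be worth a sentence acknowledging this, but it is not a gap.
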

\begin{proof}
As in the proof of Proposition~\ref{prop:fullspan} we see that $C$ dominates $G=K_n\times K_m$.

We show that $C$ separates every pair $x, y$ of distinct vertices in $G$.
Let $R_r$ be the row not in the row span of $C$. Notice that $V(G) - R_r \cong K_n \times K_{m-1}$
and that $C$ satisfies the hypotheses of Proposition~\ref{prop:fullspan} when considered
as a subset of $V(G) - R_r$.
Thus $C$ separates $x, y$ if neither is in $R_r$, and so we may assume that $x \in R_r$,
say $x=(i,r)$.
\begin{enumerate}
\item[(a)] First assume that $y = (j, r)$ with $i \ne j$.  Since
$cs(C) = n$, there exists $1 \le s \le m$
such that $r \ne s$ and $(i, s) \in C$. This vertex $(i, s)$  separates $x$ and $y$.
Next, assume that $y = (i, t)$ for some $1 \le t \le m$ with $t \ne r$.
If $y \in C$ then $y$ separates $x$ and $y$.
However if $y \not\in C$, then since each row of $G$, other than $R_r$, is in the row span of $C$
there exists $1 \le j \le n$ with $i \ne j$ such that $(j, t) \in C$. It follows that
$(j, t)$ separates $x$ and $y$.
\item[(b)] Next, assume that $y = (j, s)$ where $i \ne j$ and $r \ne s$. If we assume that $C$ does
not separate $x$ and $y$, then $C$ does not satisfy the 4-Corners Property with respect to
columns $C_i$, $C_j$ and rows $R_r$, $R_s$.  In addition, since $R_r$ is not in the row span of $C$
$$C \cap (C_i \cup C_j \cup R_r \cup R_s)\subseteq \{(i, s), (j, s)\}\,.$$
Since both $C_i$ and $C_j$ are in the row span of $C$, it follows that
$C \cap (C_i \cup C_j \cup R_r \cup R_s)= \{(i, s), (j, s)\}$. This means that $R_s$
contains exactly two members of $C$ and they are both column-isolated, contradicting
one of the assumptions. Hence, this case cannot occur either, and it follows that $C$ separates $x$ and $y$.
\end{enumerate}
Therefore, $C$ is an ID code of $K_n \times K_m$.
\end{proof}

\section{Proofs of Main Results}  \label{sec:proofs}

In this section we prove all of our main results.  The general strategy will be to construct
an ID code of the claimed optimal size (by employing
Propositions~\ref{prop:fullspan} and \ref{prop:partialspan}) and prove
the given direct product has no smaller ID code.

\setcounter{theorem}{0}
We treat the smallest case first.
\begin{theorem}
For any positive integer $m \ge 5$, $\gid(K_2 \times K_m)=m-1$. In addition,
if $3 \le m \le 4$, $\gid(K_2\times K_m)=m$.
\end{theorem}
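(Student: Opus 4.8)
The plan is to exploit the extremely simple structure of $K_2 \times K_m$. Since the first factor has only two vertices, every codeword lies in column $C_1$ or column $C_2$, so a candidate code $C$ is completely described by the two index sets $A = \{j : (1,j) \in C\}$ and $B = \{j : (2,j) \in C\}$, with $|C| = |A| + |B|$. First I would record the closed neighborhoods $N[(1,j)] = \{(1,j)\} \cup \{(2,k) : k \ne j\}$ and $N[(2,j)] = \{(2,j)\} \cup \{(1,k) : k \ne j\}$, from which the trace $C \cap N[\cdot]$ of each vertex can be read off directly in terms of $A$, $B$, and the membership of the relevant index. Because the constructive Propositions~\ref{prop:fullspan} and \ref{prop:partialspan} require three distinct occupied columns, they do not apply when $n = 2$, so the verification will have to be done by hand using this parametrization.

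Next I would translate the two defining properties of an ID code into conditions on $A$ and $B$. A short case analysis shows that $(1,j)$ is dominated if and only if $j \in A$ or $B \not\subseteq \{j\}$, and symmetrically for $(2,j)$. For separation I would split the pairs into those sharing a row or a column, which are separated immediately by Lemma~\ref{lem:rowandcolseparation}, and the remaining diagonal pairs $(1,r),(2,s)$ with $r \ne s$, which are governed by the 4-Corners Property (Lemma~\ref{lem:4corners}); the key simplification is that with only two columns $C_1 \cup C_2$ is the entire vertex set, so the 4-corners test collapses to the condition $A \cup B \not\subseteq \{r,s\}$. Carrying this out I expect the clean characterization that $C$ separates all pairs if and only if $A \cup B$ omits at most one index of $[m]$ (forced by the same-column pairs) and $|A \cup B| \ge 3$ (forced by the diagonal pairs), i.e.\ $|A \cup B| \ge \max(m-1, 3)$. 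The lower bound $|C| \ge m-1$ of Lemma~\ref{lem:lowerbound} then reappears as $|A| + |B| \ge |A \cup B| \ge m-1$.

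For the upper bound when $m \ge 5$, I would exhibit the explicit code $A = \{1,2\}$ and $B = \{3, 4, \ldots, m-1\}$, which omits index $m$ and has size $2 + (m-3) = m-1$, and check the domination and separation conditions above; note that $|A| \ge 2$ and $|B| = m-3 \ge 2$ are precisely what the domination conditions demand, which is also where the hypothesis $m \ge 5$ is used. This gives $\gid(K_2 \times K_m) = m-1$ for $m \ge 5$.

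The main obstacle is the small cases $m \in \{3,4\}$, where the generic bound $m-1$ is not tight and every code of size $m-1$ must be ruled out. The crucial point is that combining the separation requirement $|A \cup B| \ge m-1$ with domination forces $A$ and $B$ to be disjoint and each of size at least $2$: if $w$ is the omitted index, then dominating $(1,w)$ and $(2,w)$ requires $A \ne \emptyset$ and $B \ne \emptyset$, while for any $j \in A$ dominating $(2,j)$ (with $j \notin B$) forces $|A| \ge 2$, and symmetrically $|B| \ge 2$. Hence $m-1 = |A| + |B| \ge 4$, impossible for $m \le 4$; together with the requirement $|A \cup B| \ge 3$, which already excludes $|C| = 2$ when $m = 3$, this yields $\gid(K_2 \times K_m) \ge m$. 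The matching upper bound of $m$ is supplied by taking $C$ to be a whole column, e.g.\ $A = \emptyset$ and $B = [m]$, which one verifies directly is an ID code for every $m \ge 3$.
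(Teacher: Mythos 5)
Your proof is correct, and for $m \ge 5$ it coincides with the paper's: the same lower bound $|C| \ge m-1$ (Lemma~\ref{lem:lowerbound}) and literally the same code $A=\{1,2\}$, $B=\{3,\dots,m-1\}$. Where you genuinely diverge is in the small cases and in the overall organization: instead of the paper's brief ad hoc argument (for $m=3$ it observes $rs(C)\ge 2$ and that no $2$-set meeting two rows dominates, and for $m=4$ it only says ``a similar argument shows''), you derive a complete characterization of ID codes of $K_2\times K_m$ in terms of the index sets $A,B$ --- domination conditions plus $|A\cup B|\ge \max(m-1,3)$ --- and then read off all the lower bounds from it. I verified the characterization: the same-column pair $(1,r),(1,s)$ is separated exactly when $\{r,s\}\cap(A\cup B)\ne\emptyset$, giving the ``omit at most one index'' condition; same-row pairs only require $C\ne\emptyset$; and the 4-Corners test (Lemma~\ref{lem:4corners}) does collapse to $A\cup B\not\subseteq\{r,s\}$ since $C_1\cup C_2$ is the whole vertex set. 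Your $m\le 4$ argument is then airtight: a code of size $m-1$ forces $A\cap B=\emptyset$ with an omitted index $w$, and dominating $(1,w)$, $(2,w)$, $(2,j)$ for $j\in A$, and $(1,j)$ for $j\in B$ forces $|A|\ge 2$ and $|B|\ge 2$, whence $m-1\ge 4$; the condition $|A\cup B|\ge 3$ disposes of smaller codes, and the full-column code matches the bound (the paper's code $\{(1,1),(1,2),(1,3)\}$ is the same construction up to the factor-swapping symmetry). One sentence of yours is slightly imprecise --- Lemma~\ref{lem:rowandcolseparation} only separates same-row/same-column pairs in which at least one vertex is a codeword, not all such pairs --- but this is harmless because your same-column condition $|A\cup B|\ge m-1$ is exactly what handles the non-codeword pairs, and you do derive it. The trade-off: the paper's proof is shorter, while your characterization is uniform, makes the $m=4$ case explicit rather than asserted, and cleanly isolates where the hypothesis $m\ge 5$ enters (namely $|A|,|B|\ge 2$ in the domination conditions).
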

\begin{proof}
If $C$ is any ID code of $K_2\times K_3$, then $rs(C)\ge 2$.  No subset of 2 elements
in different rows dominates $K_2 \times K_3$, and so $\gid(K_2 \times K_3) \ge 3$.  It is
easy to check that $\{(1,1), (1,2), (1,3)\}$ is an ID code.  A similar argument shows
that $\gid(K_2 \times K_4)=4$.

If $m \ge 5$, it follows from Lemma~\ref{lem:lowerbound} that $\gid(K_2\times K_m) \ge m-1$,
and it is easily checked that $\{(1,1),(1,2)\} \cup \{(2,r)\,|\, 3\le r \le m-1\}$
is an ID code.
\end{proof}

Now we turn our attention to the case when the first factor has order at least three and the
second factor is sufficiently larger than the first.

\begin{theorem}
For positive integers $n$ and $m$ where $n \ge 3$ and $m \ge 2n$,
\[\gid(K_n \times K_m) = m-1\,.\]
\end{theorem}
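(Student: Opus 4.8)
The plan is to obtain the lower bound for free and then to exhibit an explicit ID code of size exactly $m-1$. The inequality $\gid(K_n \times K_m) \ge m-1$ is immediate from Lemma~\ref{lem:lowerbound}, so the entire content of the theorem lies in constructing an identifying code $C$ with $|C| = m-1$. Since a code of that size cannot meet every one of the $m$ rows, the full-span Proposition~\ref{prop:fullspan} is unavailable, and I would instead aim to satisfy the hypotheses of the partial-span Proposition~\ref{prop:partialspan}, for which $rs(C) = m-1$ is exactly what a minimum-size code forces.

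First I would discard one row, say $R_m$, and place precisely one codeword in each of the remaining rows $R_1, \dots, R_{m-1}$; this makes every codeword row-isolated and gives $rs(C) = m-1$ automatically, so condition (2) holds and condition (5) is vacuous (no codeword fails to be row-isolated). The design problem then reduces to choosing the column of each codeword so that $cs(C) = n$ (condition (3)), so that $C$ contains a strictly increasing ``staircase'' of three vertices (condition (1)), and so that at most one codeword is isolated (condition (4)). Because each codeword is row-isolated, a codeword is isolated precisely when it is the unique codeword in its column, so condition (4) becomes the requirement that at most one column be a singleton.

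Concretely, I would start with a diagonal $(1,1), (2,2), \dots, (n,n)$, which covers all $n$ columns (giving $cs(C) = n$) and supplies the staircase $(1,1),(2,2),(3,3)$ demanded by condition (1), using $n \ge 3$. At this stage every column is a singleton, so I would use the remaining $m-1-n$ rows to double up columns: assigning distinct new rows to columns $1, 2, \dots, n-1$ raises each of those columns to at least two codewords and leaves column $n$ as the only singleton. This last step is exactly where the hypothesis is used, since doubling the $n-1$ columns needs $m-1-n \ge n-1$, that is, $m \ge 2n$; any leftover rows can be dropped into an already-doubled column. The resulting set satisfies all five hypotheses of Proposition~\ref{prop:partialspan} and has size $m-1$, completing the upper bound.

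The step I expect to be the crux is condition (4): with only $m-1$ codewords one can afford at most a single isolated codeword, and ensuring this while still covering all $n$ columns is precisely the constraint that forces $m \ge 2n$. Everything else—domination via the staircase, separation within rows and columns, and the vacuity of condition (5)—follows directly once the column assignment is fixed.
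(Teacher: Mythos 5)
Your proposal is correct and takes essentially the same route as the paper: the lower bound is quoted from Lemma~\ref{lem:lowerbound}, and the upper bound comes from an explicit $(m-1)$-element code with one row-isolated codeword in each of $m-1$ rows, all $n$ columns covered, and at most one singleton column, verified through Proposition~\ref{prop:partialspan}. Your diagonal-plus-doubling arrangement is only a cosmetic variant of the paper's set $\{(i,2i-1),(i,2i)\,|\,i\in[n-1]\}\cup\{(n,j)\,|\,2n-1\le j\le m-1\}$, and your accounting of where $m\ge 2n$ is needed matches the paper's construction exactly.
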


\begin{proof}
Consider the set
$$D = \{(i, 2i-1), (i, 2i) \,|\, i \in [n-1]\}\,\, \cup \,\,\{(n, j)\, |\, 2n-1 \le j \le m-1\}.$$
Notice that each $v$ in $D$ is row-isolated but not column-isolated, $rs(D)=m-1$ and $cs(D) = n.$
Furthermore, $(1,1),(2,3)$ and $(3,5) \in D$. Thus Proposition~\ref{prop:partialspan} guarantees
that $D$ is an ID code and Lemma~\ref{lem:lowerbound} gives the desired result.
\end{proof}

\noindent We now focus on direct products of the form $K_n \times K_m$ where
$6 \le n \le m \le 2n-1$ and prove that in all cases

\begin{equation} \label{eqn:mainresult}
\left\lfloor \frac{2m+2n}{3}\right\rfloor \le \gid(K_n \times K_m) \le \left\lceil
\frac{2m+2n}{3} \right\rceil\,.
\end{equation}

\noindent For the remainder of this paper, when considering any ID code $C$ of
$G=K_n \times K_m$ we define $A_c = \{ v \in C\,|\,v \text{ is row-isolated in }C\}$
and $B_c = \{ v \in C\,|\, v \text{ is column-isolated in }C\}$.
Let $|A_c| = x$ and
let $p$ denote the number of columns $C_i$ of $G$ such that $|C \cap C_i| \ge 2$ and
$C \cap C_i \subseteq A_c$. Similarly, let $|B_c|=y$ and let $q$ represent the number
of rows $R_r$ of $G$ such that $|C \cap R_r| \ge 2$ and $C \cap R_r \subseteq B_c$.
 Notice that $C$ contains at most one isolated codeword, in which case
$|A_c \cap B_c| = 1$. Otherwise, $A_c \cap B_c = \emptyset$. Moreover, we always have
$|C| \ge |A_c \cup B_c| \ge x+y-1$.

\begin{theorem} \label{thm:mod0or2}
If $n$ and $m$ are positive integers such that   $6 \le n \le m \le 2n-1$
and $n + m \equiv 0 \pmod{3}$ or $n + m \equiv 2 \pmod{3}$, then
$$\gid(K_n \times K_m) = \left\lfloor\frac{2m+2n}{3}\right\rfloor.$$
\end{theorem}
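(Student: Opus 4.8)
The plan is to establish both bounds in~\eqref{eqn:mainresult} and show they meet at the floor value: I will exhibit an ID code of size $\lfloor 2(m+n)/3\rfloor$ and then prove that no ID code can be strictly smaller. For the upper bound I would build $C$ out of ``dominoes'': let $\alpha$ columns each carry exactly two row-isolated codewords and let $\beta$ rows each carry exactly two column-isolated codewords, placing these two families in disjoint blocks of rows and columns so that each of the $2\alpha$ type-column codewords is alone in its row and each of the $2\beta$ type-row codewords is alone in its column. Such a configuration spans $2\alpha+\beta$ rows and $\alpha+2\beta$ columns using $2(\alpha+\beta)$ codewords. When $n+m\equiv 0\pmod 3$ I take $\alpha=(2m-n)/3$ and $\beta=(2n-m)/3$, which are nonnegative integers since $n\le m\le 2n-1$ (indeed $\alpha\ge 1$ and $\beta\ge 1$); then $C$ spans all $m$ rows and all $n$ columns with exactly $2(m+n)/3$ codewords and no isolated vertex, and after ordering the blocks so three codewords strictly increase in both coordinates, all hypotheses of Proposition~\ref{prop:fullspan} hold.

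When $n+m\equiv 2\pmod 3$ I adjust the recipe by adding a single isolated codeword in a fresh row and column and taking $\alpha=(2m-n-1)/3$, $\beta=(2n-m-1)/3$ (again nonnegative integers). This spans all $m$ rows and $n$ columns with $2(\alpha+\beta)+1=(2(m+n)-1)/3=\lfloor 2(m+n)/3\rfloor$ codewords and exactly one isolated vertex, so Proposition~\ref{prop:fullspan} applies once more; the boundary sub-case $\beta=0$ (forcing $m=2n-1$) is handled by the same construction using only the $\alpha$ columns together with the single isolated codeword. In both residue classes this produces an ID code of the claimed size.

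For the lower bound, let $C$ be an arbitrary ID code and keep the quantities $x=|A_c|$, $y=|B_c|$, $p$, $q$ introduced above. Double counting over columns gives $|C|\ge 2\,cs(C)-y$, since the $y$ columns meeting $C$ in one vertex contribute $y$ codewords while every other occupied column contributes at least two; symmetrically $|C|\ge 2\,rs(C)-x$. Summing these, applying $rs(C)\ge m-1$ and $cs(C)\ge n-1$ from Lemma~\ref{lem:lowerbound}, and then using $x+y\le |C|+1$, I obtain $2|C|\ge 2(m+n)-4-(x+y)\ge 2(m+n)-5-|C|$, that is $3|C|\ge 2(m+n)-5$. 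In both residue classes this is exactly one short of the claim, so it suffices to rule out the single extremal cardinality: $|C|=2(m+n)/3-1$ when $n+m\equiv 0$, and $|C|=(2(m+n)-4)/3$ when $n+m\equiv 2$.

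The main obstacle is extracting this last unit. At the extremal cardinality only one or two units of slack are available across all the inequalities above, which forces a nearly rigid structure: every occupied column of size at least two must be \emph{pure} (all its codewords row-isolated, so $c_{\ge 2}=p$), every occupied row of size at least two must be pure column-isolated ($r_{\ge 2}=q$), these multi-vertex lines hold exactly two codewords, and the spans are as small as the remaining slack permits. I would finish with a short case analysis on how the slack distributes among $cs(C)-(n-1)$, $rs(C)-(m-1)$, and the number of isolated codewords, invoking the 4-corners consequences. In the decisive branch $cs(C)=n-1$, a column meeting $C$ in exactly two row-isolated codewords (i.e.\ $p\ge 1$) contradicts Corollary~\ref{cor:3results2}; if $p=0$ the size constraints force $q\ge 1$ with $rs(C)=m-1$, and the row version of Corollary~\ref{cor:3results2} applies; two isolated codewords are excluded by Corollary~\ref{cor:3results1}; and the case $cs(C)=n-1$, $rs(C)=m-1$ with an isolated codeword is excluded by Corollary~\ref{cor:3results3}. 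Each branch contradicts $C$ being an ID code, so the extremal cardinality is impossible and $\gid(K_n\times K_m)\ge\lfloor 2(m+n)/3\rfloor$, completing the proof.
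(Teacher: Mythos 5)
Your upper-bound half is correct and is in fact identical to the paper's: your domino families with $\alpha=(2m-n)/3$, $\beta=(2n-m)/3$, respectively $\alpha=(2m-n-1)/3$, $\beta=(2n-m-1)/3$ together with one isolated codeword (with $m=2n-1$ being the $\beta=0$ instance), are exactly the sets $D_1$, $D_2$, $D_3$ in the paper's proof, certified by Proposition~\ref{prop:fullspan}. Your opening counting for the lower bound is also sound, and is packaged more cleanly than in the paper: adding $|C|\ge 2\,cs(C)-y$ and $|C|\ge 2\,rs(C)-x$, then invoking Lemma~\ref{lem:lowerbound} and $x+y\le |C|+1$, gives $3|C|\ge 2(m+n)-5$ in one stroke, where the paper rederives this count separately inside each of four span cases. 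You also correctly reduce the problem to excluding a single extremal cardinality in each residue class.

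The genuine gap is the endgame, which is exactly where the paper spends almost all of its effort (its Cases 2--4, with sub-cases $p=1$, $q=1$, $p=q=0$): you leave it unexecuted (``I would finish with a short case analysis'') and the details you do assert are partly wrong. At the extremal size the exact slack identity is $2[cs(C)-(n-1)]+2[rs(C)-(m-1)]+s_a+s_b+(|C|+1-x-y)=2$ (respectively $=1$ when $n+m\equiv 2\pmod 3$), where $s_a$, $s_b$ are the excesses over two codewords in multi-codeword columns and rows. One unit of the term $|C|+1-x-y$ can be spent on a codeword that is \emph{neither} row- nor column-isolated, and then a two-codeword column need not be pure, so your blanket claim that every multi-codeword line lies in $A_c$ or $B_c$ fails; this impure-codeword configuration is precisely the hardest branch (the paper's Case 4 with $p=q=0$, also surfacing in its Cases 2 and 3), and it is eliminated not by any corollary but by the count $|C|\ge (m-1)+(n-2)$, forcing $n+m\le 6$ against $n\ge 6$. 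Your claimed branch endings are likewise off: in the branch $cs(C)=n-1$, $rs(C)=m$ with $p=0$, Corollary~\ref{cor:3results2} does not apply directly; the tight structure forces $C=B_c$ and the contradiction is numeric, $m=rs(C)\le |C|=y\le n-1$. Similarly the branch $cs(C)=n$, $rs(C)=m-1$, which your budget permits when $n+m\equiv 0\pmod 3$ but which your branch list omits, closes with $|C|=2n-1=m-1$, i.e.\ $m=2n$, contradicting $m\le 2n-1$. Your framework is completable --- with the corrected rigidity statement each branch does terminate in a contradiction --- but most surviving branches die by arithmetic extracted from the corollary-forced structure rather than by a direct corollary violation, and none of that arithmetic is in your proposal; as written, the decisive step is a plan whose advertised mechanism would not close the cases.
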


\begin{proof}
Suppose $C$ is an ID code of $G=K_n \times K_m$ such that
$|C| \le \left\lfloor\frac{2n+2m}{3}\right\rfloor - 1$.
We consider 4 cases based on the possible values of $cs(C)$ and $rs(C)$.
\begin{enumerate}
\item[Case 1] Suppose $cs(C) = n$ and $rs(C) = m$.\\
Since $|B_c|=y$,  $|C-B_c| \ge 2(n-y)$ which implies $|C| \ge  2n-y$. Then
$\frac{2m+2n}{3}-1 \ge |C| \ge 2n-y$, and it follows that
$y \ge \frac{4n-2m}{3} + 1$.  Similarly, we get $x \ge \frac{4m-2n}{3}+1$.
Together these imply that
\[\frac{2m+2n}{3}-1 \ge |C| \ge x+y-1 \ge \frac{2m+2n}{3} + 1\,.\]
This is clearly a contradiction, and hence no such $C$ exists with $cs(C) = n$ and
$rs(C) = m$.

\item[Case 2] Suppose $cs(C) = n-1$ and $rs(C) = m$.\\
Note that since each codeword in $B_c$ is column-isolated and $cs(C) = n-1$, there exist
at least 2 codewords in each of the remaining $n-1-y$ columns  disjoint from
the column span of $B_c$. However, Corollary~\ref{cor:3results2} guarantees that
$|C\cap C_j| \ge 3$ for any column $C_j$ for which $|C \cap C_j| \ge 2$ and
$C \cap C_j \subseteq A_c$. Since $p$ represents the number of such
columns, $|C-B_c| \ge 2(n-1-y-p) + 3p = 2n - 2 - 2y + p$. So $|C| \ge 2n - 2 - y + p$.
Consequently, $y \ge \frac{4n-2m}{3} - 1 + p$.

Similarly, since $|A_c|=x$ and  $rs(C) = m$, $|C-A_c| \ge 2(m-x)$
which implies $|C| \ge 2m-x$. From Case 1 we see
that this gives $x \ge \frac{4m-2n}{3} +1$. Moreover, $|C| \ge x + y -1$ so that
\[\frac{2m+2n}{3} - 1 \ge |C| \ge x+y-1 \ge \frac{2m+2n}{3} + p -1\,.\]

Thus $p \le 0$. Hence
$p=0$, and we have equality in the above so that
$$\left\lfloor\frac{2m+2n}{3}\right\rfloor-1 = |C| = x + y-1. $$
It follows that $C = A_c \cup B_c$. If there exists $v \in C-B_c$, say $v\in C_i$,
then by Corollary~\ref{cor:3results2}, $|C\cap C_i| \ge 3$.  However, this
contradicts $p=0$ since each codeword is either row-isolated or column-isolated.
 Consequently, $m=rs(C)\le |C|=|B_c|\le n-1\le m-1$.
 This contradiction shows that this case cannot occur.

\item[Case 3] Suppose $cs(C) = n$ and $rs(C) = m-1$.\\

If we interchange the roles of rows and columns in Case 2, then we are
led to $q=0$ and
\[\left\lfloor\frac{2m+2n}{3}\right\rfloor-1 = |C| = x + y-1\,.\]

Thus  $C = A_c \cup B_c$. On the other hand, since $cs(C)=n$ it follows
as in Case 1 that
$$y \ge \frac{4n-2m}{3} + 1 \ge \frac{4n -2(2n-1)}{3} + 1 = \frac{5}{3}\,.$$
Since $y$ is integral we conclude by Corollary~\ref{cor:3results2} that $q \ge 1$.
This contradiction shows that this case cannot occur.

\item[Case 4] Suppose that $cs(C) = n-1$ and $rs(C) = m-1$.\\
 From Case 2 and Case 3, we see that
 $$y \ge \frac{4n-2m}{3} - 1 + p \quad \text{and} \quad x \ge \frac{4m-2n}{3} - 1 + q\,.$$

 Since $cs(C) = n-1$ and $rs(C) = m-1$, it follows from Corollary~\ref{cor:3results3}
 that $C$ does not contain an isolated vertex.  It follows that
\[ \frac{2m+2n}{3} - 1 \ge |C| \ge x + y \ge = \frac{2m+2n}{3} -2 + p + q\,.\]
Hence $p+q \le 1$.

Suppose $p=1$.  Then we have equality throughout the above inequality, and thus
$C=A_c \cup B_c$.  Suppose there exists $v \in B_c$, say $v\in R_r$. Since $q=0$
and there are no isolated codewords, it follows that $C$ contains another
codeword $u$ in $R_r$ that is not column-isolated.  But $u \not\in A_c\cup B_c$
which is a contradiction.  Therefore, $C=A_c$. Since $p=1$ we are led to conclude
that $cs(C)=1$, another contradiction.

To show that $q=1$ is not possible we simply interchange the roles of $A_c$
and $B_c$ in the above.

Finally, suppose $p=0=q$.

Since $p=0$, any column that contains a row-isolated codeword would also have to contain a
codeword  that is not row-isolated. Since there can exist at
most one of these to guarantee $|C| \le \left\lfloor \frac{2m+2n}{3}\right\rfloor -1$, then
there is a column $C_i$  such that $A_c \subseteq C_i$ and for some $r$,
$(i,r)\in C-(A_c \cup B_c)$. Similarly, since $q=0$, if there exists a row containing
a column-isolated codeword, then that row contains a codeword that is not
column-isolated.  Since $|C-(A_c \cup B_c)| \le 1$, such a codeword must be
$(i,r)$.  This implies that $\frac{2m+2n}{3}-1 \ge |C| \ge m-1+n-2$, and this
implies that $n+m \le 6$, a contradiction.
\end{enumerate}

Therefore, every ID code of $K_n\times K_m$ has cardinality at least
$\lfloor\frac{2m+2n}{3}\rfloor$.

An application of Proposition~\ref{prop:fullspan} shows that the following sets are
ID codes of cardinality $\lfloor\frac{2m+2n}{3}\rfloor$ and finishes the proof.

If $n + m \equiv 0 \pmod{3}$, let
$$D_1 = \{(i, 2i-1), (i, 2i)|1 \le i \le a\} \cup \{(a + 2j-1, 2a + j), (a + 2j, 2a + j) | 1 \le j \le b\}\,,$$
where $a = \frac{2m-n}{3}$ and $b=\frac{2n-m}{3}$.
For $n+m \equiv 2 \pmod{3}$ but $m\ne 2n-1$, let $a=\frac{2m-n-1}{3}$, $b = \frac{2n-m-1}{3}$, and
$$D_2 = \{(i, 2i-1), (i, 2i) \big| 1 \le i \le a\} \cup \{(a + 2j-1,2a+j),
(a+2j, 2a +j)\big|1 \le j \le b\}  \cup \{(n,m)\}\,.$$
Finally, if $m=2n-1$, let
$$D_3 = \{(i, 2i-1), (i, 2i) | i \in [n-1]\}\,\, \cup \,\, \{(n, 2n-1)\}.$$
\end{proof}

The following figure illustrates ID codes of optimal order for several
of the cases of Theorem~\ref{thm:mod0or2}.  The vertices of the  direct products
in the figure are represented but the edges are omitted for clarity.  Recall
that columns are vertical and rows are horizontal.  Solid
vertices indicate the members of an optimal ID code in each case.

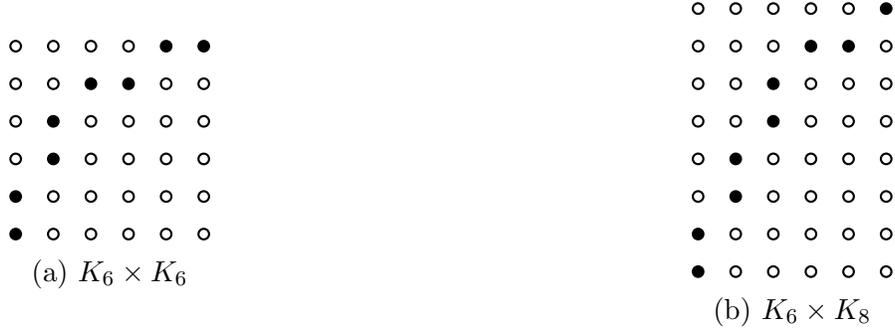
\begin{figure}[ht!]
\begin{subfigure}[]{.5\textwidth}
\centering
\begin{tikzpicture}[scale=1.0,style=thick]
\def\vr{2pt} 
\draw (0,0) [fill=black] circle (\vr); \draw (.5,0) [fill=white] circle (\vr);
\draw (1,0) [fill=white] circle (\vr); \draw (1.5,0) [fill=white] circle (\vr);
\draw (2,0) [fill=white] circle (\vr); \draw (2.5,0) [fill=white] circle (\vr);
\draw (0,.5) [fill=black] circle (\vr); \draw (.5,.5) [fill=white] circle (\vr);
\draw (1,.5) [fill=white] circle (\vr); \draw (1.5,.5) [fill=white] circle (\vr);
\draw (2,.5) [fill=white] circle (\vr); \draw (2.5,.5) [fill=white] circle (\vr);
\draw (0,1) [fill=white] circle (\vr); \draw (.5,1) [fill=black] circle (\vr);
\draw (1,1) [fill=white] circle (\vr); \draw (1.5,1) [fill=white] circle (\vr);
\draw (2,1) [fill=white] circle (\vr); \draw (2.5,1) [fill=white] circle (\vr);
\draw (0,1.5) [fill=white] circle (\vr); \draw (.5,1.5) [fill=black] circle (\vr);
\draw (1,1.5) [fill=white] circle (\vr); \draw (1.5,1.5) [fill=white] circle (\vr);
\draw (2,1.5) [fill=white] circle (\vr); \draw (2.5,1.5) [fill=white] circle (\vr);
\draw (0,2) [fill=white] circle (\vr); \draw (.5,2) [fill=white] circle (\vr);
\draw (1,2) [fill=black] circle (\vr); \draw (1.5,2) [fill=black] circle (\vr);
\draw (2,2) [fill=white] circle (\vr); \draw (2.5,2) [fill=white] circle (\vr);
\draw (0,2.5) [fill=white] circle (\vr); \draw (.5,2.5) [fill=white] circle (\vr);
\draw (1,2.5) [fill=white] circle (\vr); \draw (1.5,2.5) [fill=white] circle (\vr);
\draw (2,2.5) [fill=black] circle (\vr); \draw (2.5,2.5) [fill=black] circle (\vr);
\end{tikzpicture}
\caption{$K_6\times K_6$}
\end{subfigure}~
\begin{subfigure}[]{.5\textwidth}
\centering
\begin{tikzpicture}[scale=1.0,style=thick]
\def\vr{2pt} 
\draw (0,0) [fill=black] circle (\vr); \draw (.5,0) [fill=white] circle (\vr);
\draw (1,0) [fill=white] circle (\vr); \draw (1.5,0) [fill=white] circle (\vr);
\draw (2,0) [fill=white] circle (\vr); \draw (2.5,0) [fill=white] circle (\vr);
\draw (0,.5) [fill=black] circle (\vr); \draw (.5,.5) [fill=white] circle (\vr);
\draw (1,.5) [fill=white] circle (\vr); \draw (1.5,.5) [fill=white] circle (\vr);
\draw (2,.5) [fill=white] circle (\vr); \draw (2.5,.5) [fill=white] circle (\vr);
\draw (0,1) [fill=white] circle (\vr); \draw (.5,1) [fill=black] circle (\vr);
\draw (1,1) [fill=white] circle (\vr); \draw (1.5,1) [fill=white] circle (\vr);
\draw (2,1) [fill=white] circle (\vr); \draw (2.5,1) [fill=white] circle (\vr);
\draw (0,1.5) [fill=white] circle (\vr); \draw (.5,1.5) [fill=black] circle (\vr);
\draw (1,1.5) [fill=white] circle (\vr); \draw (1.5,1.5) [fill=white] circle (\vr);
\draw (2,1.5) [fill=white] circle (\vr); \draw (2.5,1.5) [fill=white] circle (\vr);
\draw (0,2) [fill=white] circle (\vr); \draw (.5,2) [fill=white] circle (\vr);
\draw (1,2) [fill=black] circle (\vr); \draw (1.5,2) [fill=white] circle (\vr);
\draw (2,2) [fill=white] circle (\vr); \draw (2.5,2) [fill=white] circle (\vr);
\draw (0,2.5) [fill=white] circle (\vr); \draw (.5,2.5) [fill=white] circle (\vr);
\draw (1,2.5) [fill=black] circle (\vr); \draw (1.5,2.5) [fill=white] circle (\vr);
\draw (2,2.5) [fill=white] circle (\vr); \draw (2.5,2.5) [fill=white] circle (\vr);
\draw (0,3) [fill=white] circle (\vr); \draw (.5,3) [fill=white] circle (\vr);
\draw (1,3) [fill=white] circle (\vr); \draw (1.5,3) [fill=black] circle (\vr);
\draw (2,3) [fill=black] circle (\vr); \draw (2.5,3) [fill=white] circle (\vr);
\draw (0,3.5) [fill=white] circle (\vr); \draw (.5,3.5) [fill=white] circle (\vr);
\draw (1,3.5) [fill=white] circle (\vr); \draw (1.5,3.5) [fill=white] circle (\vr);
\draw (2,3.5) [fill=white] circle (\vr); \draw (2.5,3.5) [fill=black] circle (\vr);
\end{tikzpicture}
\caption{$K_6\times K_8$}
\end{subfigure}
\caption{Examples of ID codes when $n+m \equiv 0,2\pmod{3}$}
\label{fig:IDCodes1}
\end{figure}

For a fixed $n \ge 6$ the lone exception to the formula $\lceil\frac{2m+2n}{3}\rceil$ for 
$\gid(K_n \times K_m)$ where $n \le m \le 2n-2$ and $n+m$ congruent to 1 modulo 3
is the instance $m=2n-5$.  We now prove Theorem~\ref{thm:2nminus5} which shows
the correct value is $\lfloor\frac{2(2n-5)+2n}{3}\rfloor$.  We restate it here for 
convenience.

\begin{theorem}
For a positive integer $n \ge 6$,
\[\gid(K_n \times K_{2n-5})=2n-4\,.\]
\end{theorem}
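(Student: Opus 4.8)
The plan is to establish the two matching bounds $2n-4 \le \gid(K_n \times K_{2n-5}) \le 2n-4$. First I would record the arithmetic: with $m=2n-5$ we have $n+m = 3n-5 \equiv 1 \pmod 3$ and $2m+2n = 6n-10 = 3(2n-4)+2$, so $\lfloor (2m+2n)/3\rfloor = 2n-4$ while $\lceil (2m+2n)/3\rceil = 2n-3$. Thus the target value $2n-4$ is the \emph{floor}, one below the generic formula of Theorem~\ref{thm:1mod3}, and the entire content is to show that this floor is actually attained.

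For the lower bound $\gid \ge 2n-4$ I would rerun the four-case analysis from the proof of Theorem~\ref{thm:mod0or2}, which never uses the residue of $n+m$ modulo $3$ and so yields $\gid \ge \lfloor (2m+2n)/3\rfloor$ for every pair with $6 \le n \le m \le 2n-1$. Assuming an ID code $C$ with $|C| \le 2n-5$, the three cases $cs(C)=n, rs(C)=m$; $\;cs(C)=n-1, rs(C)=m$; $\;cs(C)=n, rs(C)=m-1$ are contradictory exactly as before; in fact, because $2m+2n \equiv 2 \pmod 3$, the integrality of $|C|$ makes these close even more quickly. The only surviving possibility is the case $cs(C)=n-1$, $rs(C)=m-1$, where the same integrality forces $p=q=0$ and $|C|=2n-5$; there the structural argument concentrates all row-isolated codewords in a single column and all column-isolated codewords in a single row, giving $|C| \ge (m-1)+(n-2) = 3n-8$, which contradicts $|C|=2n-5$ since $n \ge 6$.

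For the upper bound I would exhibit an explicit set of size $2n-4$ and verify it against Proposition~\ref{prop:partialspan}. Concretely, take the $2(n-4)$ row-isolated codewords $(i,2i-1),(i,2i)$ for $1 \le i \le n-4$ (filling columns $1,\dots,n-4$ and rows $1,\dots,2n-8$), the three column-isolated codewords $(n-3,2n-7),(n-2,2n-7),(n-1,2n-7)$ lying in the single row $R_{2n-7}$, and the one isolated codeword $(n,2n-6)$. This set $C$ has $cs(C)=n$, $rs(C)=2n-6=m-1$, and cardinality $2(n-4)+3+1 = 2n-4$; it contains the strictly increasing triple $(1,1),(2,3),(3,5)$ (which lies in $C$ for all $n \ge 6$, using the tripled row when $n=6$). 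Every codeword is row-isolated or column-isolated, there is exactly one isolated codeword, and the unique row all of whose codewords are column-isolated-but-not-row-isolated is $R_{2n-7}$, which contains exactly three codewords; hence all five hypotheses of Proposition~\ref{prop:partialspan} hold and $C$ is an ID code.

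The crux is this construction, and in particular the ``tripled row'': replacing the usual pair of doubled columns by a single row carrying three column-isolated codewords is what lets three columns be spanned while only one new row is consumed, and it is precisely condition~(5) of Proposition~\ref{prop:partialspan} that forbids achieving this with two codewords. The saving of one codeword over the generic $n+m\equiv 1$ value is available only because $m-1 = 2n-6$ makes the counts (namely $n-4$ doubled columns, one tripled row, and one isolated codeword) fit exactly. I expect the routine-but-fiddly part to be the boundary instance $n=6$, where only two doubled columns survive and one must check separately that the increasing diagonal and all of the span counts still come out right.
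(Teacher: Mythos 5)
Your proposal is correct, and your upper bound coincides exactly with the paper's: the same set $\{(i,2i-1),(i,2i) \mid 1 \le i \le n-4\}$ together with the tripled row $R_{2n-7}$ (codewords in columns $n-3,n-2,n-1$) and the isolated codeword $(n,2n-6)$, verified via Proposition~\ref{prop:partialspan}, including your correct observation that condition (1) at $n=6$ is supplied by $(3,5)=(n-3,2n-7)$ in the tripled row. Where you genuinely diverge is the organization of the lower bound. The paper does \emph{not} rerun the four-case $\bigl(cs(C),rs(C)\bigr)$ analysis; it splits only on $rs(C)\in\{2n-6,\,2n-5\}$ and uses $cs(C)\ge n-1$ uniformly: when $rs(C)=2n-6$ it derives $x\ge 2n-7+q$ and $y\ge 3$, forces $q=0$ with $C=A_c\cup B_c$, and then notes that at least two codewords of $B_c$ are non-isolated, so some row has $\ge 2$ codewords all in $B_c$, giving $q\ge 1$, a contradiction; when $rs(C)=2n-5$ it forces $C=A_c$, whence $cs(C)\le \frac{2n-6}{2}+1=n-2$, contradicting Lemma~\ref{lem:lowerbound}. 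Your four-case rerun is valid and buys something the paper's leaner argument does not state locally: it confirms that the machinery of Theorem~\ref{thm:mod0or2} is congruence-free and yields $\gid(K_n\times K_m)\ge\lfloor 2(n+m)/3\rfloor$ for all $6\le n\le m\le 2n-1$, which is precisely the paper's inequality~\eqref{eqn:mainresult}. One detail in your Case 4 is off, though harmlessly so: applying integrality to each bound separately gives $y\ge 3+p$ and $x\ge 2n-7+q$, so $x+y\ge 2n-4+p+q>2n-5\ge|C|$ and the case closes immediately; the structural step you import from Theorem~\ref{thm:mod0or2} (concentrating $A_c$ in one column to get $|C|\ge(m-1)+(n-2)$) presupposes a codeword outside $A_c\cup B_c$, which your own forced equality $C=A_c\cup B_c$ excludes --- that tension is itself the contradiction, so your conclusion stands, but the configuration you describe cannot actually occur.
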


\begin{proof}
Assume there exists an ID code $C$ for $K_n \times K_{2n-5}$ such that
$|C| \le 2n-5$. Since $rs(C) \ge 2n-6$, we consider the following 2 cases.
\begin{enumerate}
\item[Case 1] Suppose that $rs(C) = 2n-6.$\\
Since each codeword in $A_c$ is row-isolated and $rs(C) = 2n-6$, there exist at
least 2 codewords in each of the remaining $2n-6-x$ rows disjoint from the row
span of $A_c$. However, Corollary~\ref{cor:3results2} guarantees that
$|C \cap R_r| \ge 3$ for any row $R_r$
where $C \cap R_r \subseteq B_c$. Since $q$ represents the number of these rows,
$|C-A_c| \ge 2(2n-6-x-q)+3q$ which implies $|C| \ge 4n-12 -x+q$. Consequently,
$2n-5 \ge 4n-12-x+q$ which implies $x \ge 2n-7+q.$

Similarly, since $cs(C) \ge n-1$ and each codeword in $ B_c$ is column-isolated,
there exist at least 2 codewords of $C$ in each of the remaining $n-1-y$ columns
disjoint from the column span of $B_c$. Thus $|C-B_c| \ge 2(n-1-y)$ which implies that
$|C| \ge 2n-2-y$. Therefore, $ y \ge 3.$ It follows that
$$2n-5 \ge |C| \ge x + y -1 \ge 2n-5 + q.$$
Thus, $q =0$. Moreover,  we have equality in the above and therefore $C = A_c \cup B_c$.
 On the other hand, $y \ge 3$ and only one of these column-isolated codewords can be
 isolated. Consequently, $q \ge 1$ since each codeword of $C$ is either row-isolated
 or column-isolated, a contradiction.
\item[Case 2] Suppose $rs(C) = 2n-5$. \\
Using a similar argument as in Case 1, we have $|C-A_c| \ge 2(2n-5-x)$ which implies
$|C| \ge 4n-10-x$. This implies $2n-5 \ge |C| \ge x \ge 2n-5.$
Therefore, it follows that $C = A_c$, and thus $cs(C)=cs(A_c) \le \frac{2n-6}{2} + 1 = n-2$,
a contradiction to Lemma~\ref{lem:lowerbound}.
\end{enumerate}
Therefore, no such identifying code $C$ exists with $|C| \le 2n-5$. It follows that
$\gid(G) \ge 2n-4$.

An application of Proposition~\ref{prop:partialspan} shows that the
set
$$D = \{(i, 2i-1), (i, 2i) \big| 1 \le i \le n-4\} \cup \{(n-3, 2n-7),(n-2,2n-7),
(n-1, 2n-7),(n, 2n-6)\}\,,$$
is an ID code of $K_n \times K_{2n-5}$ of cardinality $2n-4$.

\end{proof}

\begin{theorem} 
Let $n$ and $m$ be positive integers such that $6 \le n \le m \le 2n-2$
and  $m \ne 2n-5$.  If $n + m \equiv 1 \pmod{3}$, then
\[\gid(K_n \times K_m) = \left\lceil\frac{2m+2n}{3}\right\rceil\,.\]
\end{theorem}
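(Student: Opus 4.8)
The plan is to prove both inequalities of (\ref{eqn:mainresult}) and then upgrade the lower bound to the ceiling. For the upper bound I would build a code attaining $\lceil (2m+2n)/3\rceil=\frac{2(m+n)+1}{3}$ and certify it with Proposition~\ref{prop:fullspan}; since Case~1 below shows that a code with $cs(C)=n$ and $rs(C)=m$ must already have at least $\frac{2(m+n)+1}{3}$ codewords, the natural candidate has full row and column span. Concretely I would take the two ``staircase'' blocks $\{(i,2i-1),(i,2i)\}$ and the flipped blocks $\{(a+2j-1,2a+j),(a+2j,2a+j)\}$ from the proof of Theorem~\ref{thm:mod0or2}, choose $a,b$ as close as possible to $(2m-n)/3$ and $(2n-m)/3$, and adjust by one or two extra row- or column-isolated codewords so that $rs(C)=m$, $cs(C)=n$ and $|C|=\frac{2(m+n)+1}{3}$; conditions (1)--(4) of Proposition~\ref{prop:fullspan} are then routine to verify.

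The substance is the lower bound. I would assume $C$ is an ID code with $|C|\le\lfloor(2m+2n)/3\rfloor=\frac{2(m+n)-2}{3}$ and run the same four-case analysis on $(cs(C),rs(C))\in\{n-1,n\}\times\{m-1,m\}$ as in the proof of Theorem~\ref{thm:mod0or2}, retaining $x=|A_c|$, $y=|B_c|$, $p$, $q$, and writing $\delta=|C-(A_c\cup B_c)|$ for the number of codewords that are neither row- nor column-isolated. Because the threshold is one larger than in Theorem~\ref{thm:mod0or2}, the inequalities no longer close the cases by themselves. In Case~1 ($cs(C)=n$, $rs(C)=m$) the bounds $y\ge\frac{4n-2m+2}{3}$ and $x\ge\frac{4m-2n+2}{3}$ still give $|C|\ge x+y-1\ge\frac{2(m+n)+1}{3}$, an immediate contradiction; but Cases~2 and~3 only yield $p\le1$ and $q\le1$, and Case~4 only $p+q+\delta\le2$, so each residual value must be eliminated by a finer structural argument.

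In Cases~2 and~3 I would analyze the equality configurations through an exact count of codewords per column and per row, using that a single-codeword column holds a $B_c$-vertex, a single-codeword row holds an $A_c$-vertex, and (by Corollary~\ref{cor:3results2}) a column or row whose codewords all lie in $A_c$ or all in $B_c$ must contain at least three of them. This pins the borderline subcases down to $y=n-2$ or $x=m-2$, which force relations between $m$ and $n$: in Case~2 ($cs(C)=n-1$, $rs(C)=m$) they read $m=\frac{n+5}{2}$ or $m=\frac{n+2}{2}$, each impossible since $m\ge n$ and $n\ge6$. In Case~3 ($cs(C)=n$, $rs(C)=m-1$) the subcase $q=1$ forces exactly $m=2n-5$, excluded by hypothesis, while the subcase $q=0$, $\delta=1$ forces $m=2n-2$ together with a configuration in which the column-mate and the row-mate of the unique leftover vertex form a three-corner pattern that violates the 4-Corners Property (Lemma~\ref{lem:4corners}) and hence fails to be separated. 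This is exactly where $m\ne2n-5$ is used: the surviving equality family in Case~3 is genuinely realizable precisely when $m=2n-5$, where it is the optimal code of Theorem~\ref{thm:2nminus5}, so it can be excluded only by forbidding that value.

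Case~4 ($cs(C)=n-1$, $rs(C)=m-1$) is cleaner because Corollary~\ref{cor:3results3} forbids an isolated codeword, so $|C|=x+y+\delta$ and the bounds give $p+q+\delta\le2$. I would kill $p+q=2$ by the count $x=m-2$, $y=n-2$ (forcing $m+n\le10$), kill $p+q=1$, $\delta=1$ again through Corollary~\ref{cor:3results2} (the $n-3\ge3$ two-codeword columns would each need the single leftover vertex), and kill $p=q=0$ by noting that $p=0$ and $q=0$ force every column without a $B_c$-vertex and every row without an $A_c$-vertex to contain a leftover, so at most $\delta$ such columns and rows exist; this yields $x\ge m-1-\delta$ and $y\ge n-1-\delta$, whence $|C|\ge m+n-2-\delta\ge m+n-4$, again contradicting $|C|\le\frac{2(m+n)-2}{3}$ since $m+n\ge12$. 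The main obstacle throughout is that the looser bound spawns many borderline configurations that no single inequality rejects; each must be dismantled using the exact 4-Corners structure and Corollaries~\ref{cor:3results2}--\ref{cor:3results3}, and the genuinely delicate one is the Case~3 equality family, eliminated only by combining the 4-Corners Property with the hypothesis $m\ne2n-5$.
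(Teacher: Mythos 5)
Your proposal is correct and takes essentially the same route as the paper: the identical four-case analysis on $\bigl(cs(C),rs(C)\bigr)$ with the parameters $x,y,p,q$, the same borderline equalities (Case~2 forcing $2m=n+5$ or $2m=n+2$, Case~3 with $q=1$ forcing $m=2n-5$, which is precisely where the hypothesis enters, and Case~3 with $q=0$ forcing $m=2n-2$), and the same staircase-plus-one construction certified by Proposition~\ref{prop:fullspan}. Your only departures are local finishes --- killing the $m=2n-2$ configuration by an explicit violation of Lemma~\ref{lem:4corners} (valid, since equality forces the leftover codeword's column to contain exactly one $A_c$-vertex) and bookkeeping Case~4 via $p+q+\delta\le 2$ with the span counts $x\ge m-1-\delta$, $y\ge n-1-\delta$ --- where the paper instead uses one-sided column-span counts, but these are interchangeable ways of closing the same subcases.
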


\begin{proof}
First, notice that $\lceil\frac{2m+2n}{3}\rceil=\frac{2m+2n+1}{3}$.  Assume
that there exists an ID code $C$ for $K_n \times K_m$ such that $|C| \le \frac{2n+2m+1}{3} - 1$.
We again consider 4 cases based on the possible values of $cs(C)$ and $rs(C)$.
\begin{enumerate}
\item[Case 1] Suppose $cs(C) = n$ and $rs(C) = m$.\\
Using reasoning similar to that in Case 1 of the proof of Theorem~\ref{thm:mod0or2}
we get $y \ge \frac{4n-2m+2}{3}$, and $x \ge \frac{4m-2n+2}{3}$.
On the other hand, we know $|C| \ge x+y -1$. Consequently,
$\frac{2m+2n+1}{3}-1 \ge x+y-1 \ge \frac{2m+2n+1}{3}$, which is
clearly a contradiction.

\item[Case 2] Suppose $cs(C) = n-1$ and $rs(C) = m$.\\
Since $|B_c|=y$  and $cs(C) = n-1$, there exist
at least 2 codewords in each of the remaining $n-1-y$ columns that are disjoint from
the column span of $B_c$. However, Corollary~\ref{cor:3results2} guarantees
$|C\cap C_j| \ge 3$ for any such column $C_j$ where
$C \cap C_j \subseteq A_c$. Since $p$ represents the number of
these columns, then $|C-B_c| \ge 2(n-1-y-p) + 3p = 2n - 2 - 2y + p$. As a
result it follows that  $y \ge \frac{4n-2m-4}{3}  + p$.

Similarly, since $rs(C) = m$ and $x=|A_c|$ we get $|C-A_c| \ge 2(m-x) $ which
implies $|C| \ge 2m-x$. As in  Case 1 it follows that  $x \ge \frac{4m-2n+2}{3}$.
This yields
\[ \frac{2m+2n+1}{3} - 1 \ge |C| \ge x+y-1 \ge \frac{2m+2n+1}{3} + p -2\,.\]

Thus $p \le 1$. Assume first that $p=1$. Then we have equality
in the above  and thus $C = A_c \cup B_c$,
$y = \frac{4n-2m-1}{3}$ and $x = \frac{4m-2n+2}{3}.$ Furthermore,
$C$ contains an isolated codeword, call it $v$. Since $p=1$, there exists a column $C_i$
 such that $A_c - \{v\} = C \cap C_i$. It follows that $cs(A_c) = 2$. On the
other hand, $cs(C)=n-1$ so $B_c - \{v\}$ spans the remaining $n-3$ columns. Therefore,
$n-3 = \frac{4n-2m-1}{3}-1$ which implies $m<n$, a contradiction.

Therefore, $p=0$.  First assume that $C$ contains no isolated codeword.  Then necessarily
$C=A_c\cup B_c$.  As in the proof of Case 2 of Theorem~\ref{thm:mod0or2}
we arrive at a contradiction, and hence $C$ does contain an isolated codeword, say $v$.  Because $p=0$,
any column that contains a row-isolated codeword other than $v$ would also
have to contain a codeword that is not row-isolated. Note that
 $x \ge \frac{4m-2n+2}{3}\ge 5$, and hence there exists a
 column $C_i$  such that $A_c - \{v\} \subseteq C \cap C_i$.
 In addition there exists a codeword $(i,r)$ that is neither row-isolated
 nor column-isolated.  This means $C=A_c\cup B_c \cup \{(i,r)\}$ and so
 $y=\frac{4n-2m-4}{3}$.   It follows that
 $cs(A_c) = 2$.  On the other hand, $cs(C)=n-1$ so $B_c-\{v\}$ spans
 the remaining $n-3$ columns.  Therefore,
$n-3 = \frac{4n-2m-4}{3}-1$ which implies $2m=n+2$, a contradiction.

\item[Case 3] Suppose $cs(C) = n$ and $rs(C) = m-1$.\\

Since $|A_c|=x$ and $rs(C) = m-1$,  there exist
at least 2 codewords  in each of the remaining $m-1-x$ rows disjoint from the
row span of $A_c$. However, Corollary~\ref{cor:3results2} guarantees
$|C \cap R_r| \ge 3$ for any such row $R_r$
where $C \cap R_r \subseteq B_c$. Since $q$ represents the number of these rows,
then $|C-A_c| \ge 2(m-1-x-q) + 3q= 2m-2-2x +q$. This implies that
$x \ge \frac{4m-2n-4}{3} + q$.
Similarly, since $cs(C) = n$ and $|B_c| = y$ we get $|C-B_c| \ge 2(n-y)$ which
implies $|C| \ge 2n - y$. As in Case 1 it follows that $y \ge \frac{4n-2m+2}{3}$.
Consequently,
$$\frac{2m+2n+1}{3}-1 \ge |C| \ge x+y-1 \ge  \frac{2m+2n+1}{3} + q -2.$$

Thus $q \le 1$. Assume first that $q=1$. Then we have equality in the above and
thus $C = A_c \cup B_c$, $y = \frac{4n-2m+2}{3}$ and $x = \frac{4m-2n-1}{3}.$
Furthermore, $C$ contains an isolated codeword, call it $v$. Since $q=1$, there
exists a row $R_r$ such that $B_c - \{v\} = C \cap R_r$. Thus $rs(B_c) = 2$. On
the other hand, $rs(C) = m-1$ so $A_c - \{v\}$ spans the remaining $m-3$ rows.
Therefore, $m-3 = \frac{4m-2n-1}{3}-1$ which implies $m = 2n-5$, a contradiction.

Therefore, $q=0$. First assume $C$ contains no isolated codeword. Then necessarily
$C = A_c \cup B_c$ and since $q=0$, it follows that  $C = A_c$.
Since $cs(C) = n$ and no isolated codeword exists, it follows that
$|C| \ge 2n$. Therefore, $\frac{2m+2n+1}{3}-1 \ge 2n$ which implies $m \ge 2n+1$,
a contradiction. So $C$ contains an isolated codeword, call it $v$.

Because $q=0$, any row that contains a column-isolated codeword other than $v$ would
also have to contain a codeword that is not column-isolated. Note that
$y \ge \frac{4n-2m+2}{3} \ge \frac{4n-2(2n-2)+2}{3} = 2$ and hence there exists a
row $R_r$ such that $B_c - \{v\} \subset C \cap R_r$. In addition, there exists a
codeword $(i, r) \in C \cap R_r$ that is not column-isolated. Thus
$C = A_c \cup B_c \cup \{(i, r)\}$ and so $x = \frac{4m-2n-4}{3}$. It follows that
$rs(B_c) = 2$. On the other hand, $rs(C) = m-1$ so $A_c - \{v\}$ spans the remaining
$m-3$ rows. Therefore $m-3 = \frac{4m-2n-4}{3}-1$ which implies $m=2n-2$. However,
in this specific case $x = 2n-4$ and $y=2$. Consequently,
$n = cs(C) \le \frac{2n-5}{2} + 2 = n - \frac{1}{2}$, a contradiction.

\item[Case 4] Suppose that $cs(C) = n-1$ and $rs(C) = m-1$.\\
 From Case 2 and Case 3, we see that
 $$y \ge \frac{4n-2m-4}{3}  + p \quad \text{and}\quad x \ge \frac{4m-2n-4}{3} + q.$$
  Since $cs(C) = n-1$ and $rs(C) = m-1$, it follows from
  Corollary~\ref{cor:3results3} that $C$ does not
  contain an isolated codeword. Thus
$$\frac{2m+2n+1}{3} - 1 \ge |C| \ge x + y \ge  \frac{2m+2n+1}{3} -3 + p + q.$$

Hence $p+q \le 2$.
\begin{enumerate}
\item[(i)] Suppose that $p=0$. Then for each column $C_i$  where $A_c \cap C_i \ne \emptyset$,
there will exist another codeword in $C_i$ that is not row-isolated. To guarantee that
$\frac{2m+2n+1}{3} -1 \ge |C|$, $C$ contains at most 2 such codewords. Therefore,
$cs(A_c) \le 2$.
 If $cs(A_c)=2$, then $y = \frac{4n-2m-4}{3}$ and it
follows that
$$n-1 = cs(C) = cs(A_c) + cs(B_c) = 2 + \frac{4n-2m-4}{3}.$$
This implies
$m < n$, a contradiction, and thus $cs(A_c) < 2$. On the other hand,\\
$x \ge \frac{4m-2n-4}{3} +q \ge  \frac{8}{3}$.  Hence $C$ contains a codeword that is
neither row-isolated or column-isolated which yields $cs(A_c) = 1$. To guarantee
$\frac{2m+2n+1}{3}-1 \ge |C|$, it must be the case that $y \le \frac{4n-2m-4}{3} + 1$.
Here again we see $cs(C) = cs(A_c) + cs(B_c) = 2 + \frac{4n-2m-4}{3}$,
which we already know to be a contradiction. Thus, $p \ne 0$.

\item[(ii)]
Suppose that $q=0$. Then for each row $R_r$ where $B_c \cap R_r \ne \emptyset$,
there will exist another codeword in $R_r$ that is not column-isolated. Since $p \ne 0$,
$C$ contains at most 1 such codeword and it follows that $rs(B_c) \le 1$. On the
other hand, $y \ge \frac{2n-2m-4}{3} + p \ge p \ge 1$.  Since $C$ does not contain
an isolated codeword, $rs(B_c) = 1$. Thus $C$ contains one codeword that is neither
row-isolated or column-isolated, call it $v$, and we can write
$C = A_c \cup B_c \cup \{v\}$. Since $v$ is not column-isolated and $p=1$ then
$cs(A_c) = 2$. This implies that $|C| = m-1 + n-3 = m+n -4$. So we have
$m+n-4 \le \frac{2m+2n+1}{3} - 1$ which implies $m+n \le 10$, a contradiction.

\item[(iii)] Since $p=1$ and $q=1$, then $x \ge \frac{4m-2n-4}{3} +1$ and
$y \ge \frac{4n-2m-4}{3} + 1$. It follows that
$$\frac{2m+2n+1}{3}-1 \ge |C| \ge x+y \ge \frac{2m+2n+1}{3} - 1.$$
 Thus, $C = A_c \cup B_c$. On the other hand, since $p=1$ then $cs(A_c) = 1$.
 Since $cs(C) = n-1$, then $B_c$ must span the remaining $n-2$ columns. So
 $n-2 = \frac{4n-2m-4}{3} + 1$
which implies $m<n$, a contradiction.
\end{enumerate}
\end{enumerate}

Therefore, every ID code of $K_n \times K_m$ has cardinality at least
$\lceil \frac{2m+2n}{3}\rceil$. \\

We now present ID codes to show that this lower
bound is realized. \\
If $m \ne 2n-2$, let
$$D_1 =\{(1,1)\} \cup \{(i, 2i), (i, 2i+1)\,\big|\, 1 \le i \le a\} \cup
\{(a+2j-1, 2a+j+1), (a+2j, 2a+j+1)\big| 1 \le j \le b\}\,,$$
where $a=\frac{2m-n-2}{3}$ and $b = \frac{2n-m+1}{3}$.
It is straightforward to check that $D_1$ satisfies the properties of
Proposition~\ref{prop:fullspan} and is
therefore an ID code of $K_n \times K_m$.\\
If $m = 2n-2$, let
$$D_2 = \{(1,1)\} \cup \{(i, 2i), (i, 2i+1)\,\big| \,1 \le i \le n-2\} \cup \{(n-1,2n-2),(n,2n-2)\}\,.$$
Again, one can verify that $D_2$ satisfies all properties of
Proposition~\ref{prop:fullspan} and is therefore an ID code of $K_n\times K_{2n-2}$.

Therefore, if $m \ne 2n-5$ but $n + m \equiv 1 \pmod{3}$ and $6 \le n \le m \le 2n-2$,
then
\[\gid(K_n \times K_m) = \left\lceil\frac{2m+2n}{3}\right\rceil\,.\]
\end{proof}

Figure~\ref{fig:IDCodes2} contains examples of minimum cardinality ID codes for some cases
covered in Theorem~\ref{thm:1mod3}.  As in Figure~\ref{fig:IDCodes1} the code 
consists of the solid vertices.

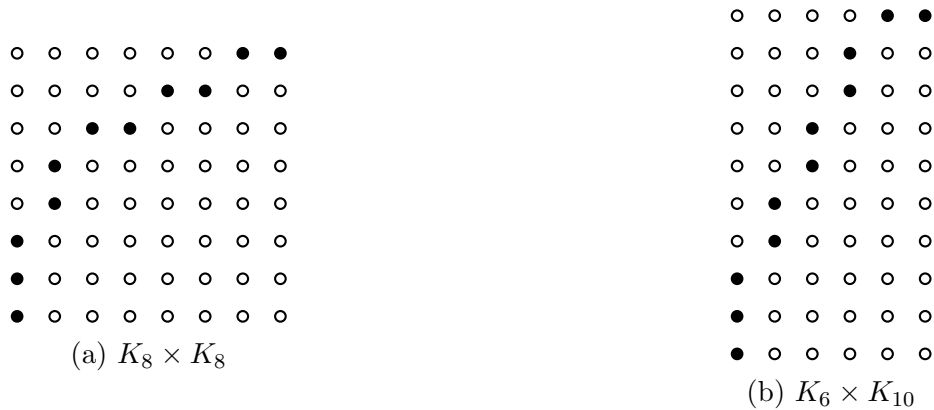
\begin{figure}[ht!]
\begin{subfigure}[]{.5\textwidth}
\centering
\begin{tikzpicture}[scale=1.0,style=thick]
\def\vr{2pt} 
\draw (0,0) [fill=black] circle (\vr); \draw (.5,0) [fill=white] circle (\vr);
\draw (1,0) [fill=white] circle (\vr); \draw (1.5,0) [fill=white] circle (\vr);
\draw (2,0) [fill=white] circle (\vr); \draw (2.5,0) [fill=white] circle (\vr);
\draw (3,0) [fill=white] circle (\vr); \draw (3.5,0) [fill=white] circle (\vr);

\draw (0,.5) [fill=black] circle (\vr); \draw (.5,.5) [fill=white] circle (\vr);
\draw (1,.5) [fill=white] circle (\vr); \draw (1.5,.5) [fill=white] circle (\vr);
\draw (2,.5) [fill=white] circle (\vr); \draw (2.5,.5) [fill=white] circle (\vr);
\draw (3,.5) [fill=white] circle (\vr); \draw (3.5,.5) [fill=white] circle (\vr);

\draw (0,1) [fill=black] circle (\vr); \draw (.5,1) [fill=white] circle (\vr);
\draw (1,1) [fill=white] circle (\vr); \draw (1.5,1) [fill=white] circle (\vr);
\draw (2,1) [fill=white] circle (\vr); \draw (2.5,1) [fill=white] circle (\vr);
\draw (3,1) [fill=white] circle (\vr); \draw (3.5,1) [fill=white] circle (\vr);

\draw (0,1.5) [fill=white] circle (\vr); \draw (.5,1.5) [fill=black] circle (\vr);
\draw (1,1.5) [fill=white] circle (\vr); \draw (1.5,1.5) [fill=white] circle (\vr);
\draw (2,1.5) [fill=white] circle (\vr); \draw (2.5,1.5) [fill=white] circle (\vr);
\draw (3,1.5) [fill=white] circle (\vr); \draw (3.5,1.5) [fill=white] circle (\vr);

\draw (0,2) [fill=white] circle (\vr); \draw (.5,2) [fill=black] circle (\vr);
\draw (1,2) [fill=white] circle (\vr); \draw (1.5,2) [fill=white] circle (\vr);
\draw (2,2) [fill=white] circle (\vr); \draw (2.5,2) [fill=white] circle (\vr);
\draw (3,2) [fill=white] circle (\vr); \draw (3.5,2) [fill=white] circle (\vr);

\draw (0,2.5) [fill=white] circle (\vr); \draw (.5,2.5) [fill=white] circle (\vr);
\draw (1,2.5) [fill=black] circle (\vr); \draw (1.5,2.5) [fill=black] circle (\vr);
\draw (2,2.5) [fill=white] circle (\vr); \draw (2.5,2.5) [fill=white] circle (\vr);
\draw (3,2.5) [fill=white] circle (\vr); \draw (3.5,2.5) [fill=white] circle (\vr);

\draw (0,3) [fill=white] circle (\vr); \draw (.5,3) [fill=white] circle (\vr);
\draw (1,3) [fill=white] circle (\vr); \draw (1.5,3) [fill=white] circle (\vr);
\draw (2,3) [fill=black] circle (\vr); \draw (2.5,3) [fill=black] circle (\vr);
\draw (3,3) [fill=white] circle (\vr); \draw (3.5,3) [fill=white] circle (\vr);

\draw (0,3.5) [fill=white] circle (\vr); \draw (.5,3.5) [fill=white] circle (\vr);
\draw (1,3.5) [fill=white] circle (\vr); \draw (1.5,3.5) [fill=white] circle (\vr);
\draw (2,3.5) [fill=white] circle (\vr); \draw (2.5,3.5) [fill=white] circle (\vr);
\draw (3,3.5) [fill=black] circle (\vr); \draw (3.5,3.5) [fill=black] circle (\vr);
\end{tikzpicture}
\caption{$K_8\times K_8$}
\end{subfigure}~
\begin{subfigure}[]{.5\textwidth}
\centering
\begin{tikzpicture}[scale=1.0,style=thick]
\def\vr{2pt} 
\draw (0,0) [fill=black] circle (\vr); \draw (.5,0) [fill=white] circle (\vr);
\draw (1,0) [fill=white] circle (\vr); \draw (1.5,0) [fill=white] circle (\vr);
\draw (2,0) [fill=white] circle (\vr); \draw (2.5,0) [fill=white] circle (\vr);

\draw (0,.5) [fill=black] circle (\vr); \draw (.5,.5) [fill=white] circle (\vr);
\draw (1,.5) [fill=white] circle (\vr); \draw (1.5,.5) [fill=white] circle (\vr);
\draw (2,.5) [fill=white] circle (\vr); \draw (2.5,.5) [fill=white] circle (\vr);

\draw (0,1) [fill=black] circle (\vr); \draw (.5,1) [fill=white] circle (\vr);
\draw (1,1) [fill=white] circle (\vr); \draw (1.5,1) [fill=white] circle (\vr);
\draw (2,1) [fill=white] circle (\vr); \draw (2.5,1) [fill=white] circle (\vr);

\draw (0,1.5) [fill=white] circle (\vr); \draw (.5,1.5) [fill=black] circle (\vr);
\draw (1,1.5) [fill=white] circle (\vr); \draw (1.5,1.5) [fill=white] circle (\vr);
\draw (2,1.5) [fill=white] circle (\vr); \draw (2.5,1.5) [fill=white] circle (\vr);

\draw (0,2) [fill=white] circle (\vr); \draw (.5,2) [fill=black] circle (\vr);
\draw (1,2) [fill=white] circle (\vr); \draw (1.5,2) [fill=white] circle (\vr);
\draw (2,2) [fill=white] circle (\vr); \draw (2.5,2) [fill=white] circle (\vr);

\draw (0,2.5) [fill=white] circle (\vr); \draw (.5,2.5) [fill=white] circle (\vr);
\draw (1,2.5) [fill=black] circle (\vr); \draw (1.5,2.5) [fill=white] circle (\vr);
\draw (2,2.5) [fill=white] circle (\vr); \draw (2.5,2.5) [fill=white] circle (\vr);

\draw (0,3) [fill=white] circle (\vr); \draw (.5,3) [fill=white] circle (\vr);
\draw (1,3) [fill=black] circle (\vr); \draw (1.5,3) [fill=white] circle (\vr);
\draw (2,3) [fill=white] circle (\vr); \draw (2.5,3) [fill=white] circle (\vr);

\draw (0,3.5) [fill=white] circle (\vr); \draw (.5,3.5) [fill=white] circle (\vr);
\draw (1,3.5) [fill=white] circle (\vr); \draw (1.5,3.5) [fill=black] circle (\vr);
\draw (2,3.5) [fill=white] circle (\vr); \draw (2.5,3.5) [fill=white] circle (\vr);

\draw (0,4) [fill=white] circle (\vr); \draw (.5,4) [fill=white] circle (\vr);
\draw (1,4) [fill=white] circle (\vr); \draw (1.5,4) [fill=black] circle (\vr);
\draw (2,4) [fill=white] circle (\vr); \draw (2.5,4) [fill=white] circle (\vr);

\draw (0,4.5) [fill=white] circle (\vr); \draw (.5,4.5) [fill=white] circle (\vr);
\draw (1,4.5) [fill=white] circle (\vr); \draw (1.5,4.5) [fill=white] circle (\vr);
\draw (2,4.5) [fill=black] circle (\vr); \draw (2.5,4.5) [fill=black] circle (\vr);
\end{tikzpicture}
\caption{$K_6\times K_{10}$}

\end{subfigure}
\caption{Several ID codes when $n+m \equiv 1\pmod{3}, m \ne 2n-5$}
\label{fig:IDCodes2}
\end{figure}

\bibliographystyle{plain}
\bibliography{mainidc}
\end{document}